\documentclass[submission,copyright,creativecommons]{eptcs}
\providecommand{\event}{ACT 2026}
\usepackage{iftex}
\ifpdf
  \usepackage{underscore}
  \usepackage[T1]{fontenc}
\else
  \usepackage{breakurl}
\fi

\usepackage{amsmath,amssymb,amsthm}
\usepackage{booktabs}
\usepackage{hyperref}
\usepackage{url}
\usepackage{tikz-cd}
\usepackage{listings}
\usepackage{lstautogobble}
\usepackage{tikz}
\usetikzlibrary{shapes.geometric, arrows.meta, positioning, fit, backgrounds}

\tikzstyle{none}=[inner sep=0pt]
\tikzstyle{box}=[rectangle, fill=white, draw=black, 
                 inner xsep=8pt, inner ysep=6pt, 
                 align=center, font=\small]
\tikzstyle{dot}=[circle, fill=black, draw=black, inner sep=1.5pt]
\pgfdeclarelayer{edgelayer}
\pgfdeclarelayer{nodelayer}
\pgfsetlayers{background,edgelayer,nodelayer,main}

\newtheorem{theorem}{Theorem}[section]
\newtheorem{proposition}[theorem]{Proposition}
\newtheorem{corollary}[theorem]{Corollary}
\newtheorem{definition}[theorem]{Definition}
\newtheorem{remark}[theorem]{Remark}
\newtheorem{observation}[theorem]{Observation}
\newtheorem{lemma}[theorem]{Lemma}

\title{A Universal Quotient of Banking APIs\footnote{Preprint
version with substantial revisions to the submitted ACT 2026
paper. Principal changes: the left skew monoidal conjectures
are replaced with proofs; Section~4 is restructured around
pairwise incomparability; and the endomorphic lifting argument
is expanded.}}

\author{Christopher Doyle
  \institute{Enterprise Architect\\Melbourne, Australia}
  \email{chrisdoyle@netsln.com}
}

\def\titlerunning{A Universal Quotient of Banking APIs}
\def\authorrunning{C. Doyle}

\begin{document}
\maketitle

\begin{abstract}
Four axioms of immutable ledger, linear consent, payment irreversibility, and 
bounded credit manifest themselves as institutional facts 
codified by banking practice for the transfer of monetary value.
These axioms certify the independence of 14 
empirically observed and jurisdictionally invariant dimensions. Morphisms 
of the ambient category do not admit sections that would reconstruct one 
dimension from another, and every morphism admits epi-mono factorisation 
through the universal quotient $Q_{\mathrm{public}}$. This factorisation is forced by definite causal order under classical realisation and echoes the factorisation theorem of Gogioso et al. Gaussian elimination across 4{,}590 endpoints 
from BIAN, CDR, and OBIE confirms rank~14 and witnesses the 
jurisdictional invariance of the quotient object. The axioms similarly
constrain the monoidal structure. The information dominance
preorder is a thin category;
all five Szlach\'{a}nyi conditions follow, establishing that
$Q_{\mathrm{public}}$ carries left skew monoidal structure.
\end{abstract}

\begin{description}
\item[Keywords:] universal quotient, empirical category theory, information dominance preorder, left skew monoidal category, BIAN, CDR, OBIE, PSD2.
\end{description}

\section{Introduction}
\label{sec:intro}
 
Every IT service morphism admits an epi-mono factorisation $f = m \circ e$
in an ambient category $\mathbf{B}$ of typed domains.
In the banking API manifold, APIs are defined by jurisdiction
and industry groups, and it is the monomorphism $m$ that enables banks to interoperate
with this diverse corpus.
So, what is the shared structural routing object for $m$ that makes
interoperability realisable, and \textit{what forces that object into existence}?
This paper identifies this routing object for the transfer of monetary value,
establishes its existence categorically, and validates its rank empirically.
In the spirit of Lawvere~\cite{Lawvere1964}, universal mapping properties
determine the routing object without presupposing it.

\subsection{The Epistemological Warrant}
\label{sec:epistemology_constraints}
 
The banking manifold operates as an intersubjective institutional reality with
domain constraints as institutional facts in the sense of
Searle~\cite{Searle1995}, collectively enforced by human actors, software
developers, and regulatory bodies.
The categorical structure of such a manifold cannot be derived \textit{a priori} because the structure
is constituted by ongoing human coordination. Standards bodies crystallise this coordination into formal specifications.
The axioms of Section~2 are therefore not \textit{a priori} stipulations
but this paper's compressed descriptions of invariants
that the practice already exhibits. The method is Lawvere's: codify observed structure as axioms, then identify the categorical consequences that the codification forces.
This empirical codification contrasts with the axiomatic recovery
of known mathematical spaces, as demonstrated by Heunen and
Kornell~\cite{HeunenKornell2022} for Hilbert spaces.

The corpus of this case study is a practical selection of public standards as proprietary banking specifications are not freely available. That selection is BIAN~\cite{BIAN}, CDR~\cite{CDR}, OBIE~\cite{OBIE}, and PSD2~\cite{PSD2}.

\section{Four Axioms on the Morphisms of \textbf{B}}
\label{sec:constraints}

\begin{remark}[Epistemological Status]
\label{rem:epistemology}
The following four axioms are proposed as necessary and sufficient. 

The axioms (TIL, PI, CLR, CBP) characterise observed banking practice.
The axioms are not restrictions imposed from 
outside, but rather describe intrinsic structure inherited from intersubjective 
practice that regulation subsequently codifies. Their independent origins are: TIL
from double-entry accounting (Pacioli), PI from payment finality law
(PSD2 Art.~80), CLR from delegated-access necessity (PSD2 Art.~67(2)),
CBP from credit facility structure.
\end{remark}

\begin{definition}[Information Dominance Preorder]
\label{def:info-preorder}
For objects $X, Y$ of $\mathbf{B}$, write $X \leq Y$ if there
exists a surjective morphism $f \colon Y \twoheadrightarrow X$.
Write $X < Y$ (strict dominance) if $X \leq Y$ and there exists
no section $\sigma \colon X \to Y$ with $f \circ \sigma = \mathrm{id}_X$.
The four axioms below assert that specific round-trip compositions
strictly advance this preorder.
\end{definition}

\begin{remark}
The preorder is defined on all objects of $\mathbf{B}$ by the
existence or absence of sections. The following axioms make their claims in terms of this preorder.
\end{remark}

\paragraph{Constraint 1 The Immutable Ledger.}
\textbf{Axiom TIL (The Immutable Ledger).}
Let $T = \mathit{Amount} \times \mathit{EventType} \times
\mathit{Timestamp} \times \cdots$ be the product space of ledger
entries, ordered by time. For any morphism $f$ out of $T$,
\begin{align*}
f &\colon T \to X \quad \text{is non-injective for every target } X
  \text{ of lower information content} \\
\nexists\, \sigma &\colon X \to T \quad
  \text{such that } f \circ \sigma = \mathrm{id}_X
\end{align*}
Every transaction appends a pair of entries with opposite signs
whose sum is zero. Reversal is not a deletion, but a new pair that
nets against the original. The original entries persist. The
ledger is therefore append-only not by prohibition but because
the structure makes deletion unnecessary and
for auditability undesirable. Append-only record keeping is at least five millennia old, and the double-entry method that enforces the zero-sum pairing dates to at least Pacioli (1494).

Every summarisation, projection or extraction from $T$ strictly
advances the information preorder and admits no section. This
strict Galois domination holds for all outgoing morphisms of $T$,
including the return morphisms of Axioms PI and CBP.

\paragraph{Constraint 2 Consent as a Linear Resource.}
\textbf{Axiom CLR (Linear Resource).}
Let $C$ be the consent record in a non-Cartesian monoidal category $\mathcal{M}$ and
let $\iota \dashv \gamma$ be the Galois connection between the preorder of consent scopes (ordered by inclusion) and the preorder of payment instructions (ordered by the information dominance preorder of Definition~\ref{def:info-preorder}).
\begin{align*}
\nexists\, \Delta &\colon C \to C \otimes C \quad \text{(no copying)} \\
(\gamma \circ \iota)(c) &= \varnothing \quad \text{for all } c \neq \varnothing
\end{align*}
Execution consumes the consent record with the Galois closure
$(\gamma \circ \iota)(c) = \varnothing$ leaving no residual
authorisation.

\paragraph{Constraint 3 Payment Irreversibility.}
\textbf{Axiom PI (Non-identity Involution).} 
Let $\iota \colon P \to T$ be the execution morphism;
let $\rho \colon T \to P$ be the return morphism.
\begin{align*}
(\rho \circ \iota)(p) &\neq p \quad \text{for all } p \in P \\
(\rho \circ \iota)(p) &> p \quad \text{in the information preorder}
\end{align*}
Reversal appends a new ledger entry with a negated amount. This operation strictly advances the information preorder to preserve the original execution instruction.

\paragraph{Constraint 4 Credit as a Bounded Poset.}
\textbf{Axiom CBP (Poset Supremum).}
Let $(L, \leq_L, \ell_{\max})$ be the bounded poset of credit facility states,
ordered by drawn amount with $\ell_{\max}$ the facility limit;
let $\delta \colon L \to T$ be the drawdown and $\rho \colon T \to L$ the repayment.
\begin{align*}
(\rho \circ \delta)(\ell) &> \ell \quad \text{for any } \ell \text{ with non-empty event log}
\end{align*}
Axiom TIL governs the credit event log. The facility state
(drawn amount, covenants, limit) is orthogonal to the event
history that TIL protects. The repayment operation strictly
advances the information preorder.


\begin{lemma}[PI and CBP as Consequences of TIL]
\label{lem:til-instances}
Axioms PI and CBP are consequences of TIL along specific
round-trip paths through the ledger. PI governs the path
$P \xrightarrow{\iota} T \xrightarrow{\rho} P$ and asserts that
$(\rho \circ \iota)(p) > p$. CBP governs the path
$L \xrightarrow{\delta} T \xrightarrow{\rho} L$ and asserts that
$(\rho \circ \delta)(\ell) > \ell$. In each case the return leg
($\rho$) is a morphism out of $T$, which is the leg that TIL
governs. TIL guarantees that $\rho$ strictly advances the
information preorder and admits no section; the round-trip
compositions inherit this strict advancement. The independent axioms of the formal system are therefore TIL
and CLR, together with the existence of the round-trip
morphisms that PI and CBP name. The four-axiom presentation is
retained because PI and CBP identify the specific round-trip
paths whose dimensional signatures the measurement operator
detects (Section~\ref{sec:basis}).
\end{lemma}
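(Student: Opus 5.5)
The plan is to reduce each of PI and CBP to a single use of TIL applied to the return leg $\rho$, and then transfer the resulting strict advancement to the round-trip composite. The argument is identical for the two paths, so I would run it once for a generic round trip $A \xrightarrow{u} T \xrightarrow{\rho} A$. For PI, take $A = P$ and $u = \iota$; for CBP, take $A = L$ and $u = \delta$, restricted to states with non-empty event log.

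\textbf{Step 1: $\rho$ strictly dominates.} The codomain $A$ is a target of lower information content than $T$. For PI this holds because a payment instruction is a projection of the ledger entry product. For CBP it holds because the facility state is orthogonal to, and coarser than, the event history. Axiom TIL then gives that $\rho \colon T \to A$ is non-injective and admits no section. Provided $\rho$ is surjective onto its image, Definition~\ref{def:info-preorder} yields $A < T$, witnessed by $\rho$.

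\textbf{Step 2: the composite inherits strict advancement.} I would argue by contradiction. Suppose $(\rho \circ u)(a) \not> a$ for some $a$. Then either $\rho \circ u = \mathrm{id}_A$ on the relevant element, or the composite admits a section. In the first case $u$ is a section of $\rho$ there. In the second case, precomposing that section with $u$ gives a section of $\rho$ through $T$. Either way this contradicts Step 1. For PI this also gives $(\rho\circ\iota)(p) \neq p$. For CBP the non-empty event log hypothesis is exactly what ensures the round trip genuinely passes through a nontrivial portion of $T$, so that TIL applies.

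\textbf{Step 3: independence bookkeeping.} Once PI and CBP are derived, they carry no content beyond TIL together with the existence of the morphisms $\iota, \rho, \delta$. CLR concerns a different, non-Cartesian category and the consent preorder, so it is not implied by TIL.

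\textbf{Main obstacle.} I expect Step 2 to be the hardest. The preorder compares objects, whereas PI and CBP compare an element $p$ with its image, so the pointwise statement $(\rho\circ u)(p) > p$ has to be read as a strict dominance between the element and its round-trip image. A non-identity endomorphism need not fail to have a section in general. I would first try to read $>$ elementwise as ``the round-trip image determines $p$ but not conversely'', and then show that a converse reconstruction would factor through a section of $\rho$. If that reading fails, I would fall back on treating the lemma as a definitional unfolding: the axioms are stipulated so that strict advancement along $\rho$ is what TIL asserts for all outgoing morphisms of $T$, including these return legs.
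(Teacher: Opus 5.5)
Your route is the paper's: apply TIL to the return leg $\rho$, then pass strict advancement to the round trip. The paper does no more than the fallback you describe. The statement of TIL already stipulates strict domination ``for all outgoing morphisms of $T$, including the return morphisms of Axioms PI and CBP''. So your Step~1 appeal to ``lower information content'' is not needed. The paper simply asserts that $\rho\circ\iota$ and $\rho\circ\delta$ inherit strict advancement, and it likewise just asserts the independence bookkeeping of your Step~3.

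Where you try to go beyond the paper, Step~2 has a genuine gap.

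\begin{itemize}
\item \textbf{What is valid.} The only sound inference is the split-epi fact: a section $\sigma$ of $\rho\circ u$ would make $u\circ\sigma$ a section of $\rho$. This does justify the paper's unexplained claim that the composite inherits the absence of a section. It shows that $\rho\circ u$ is not split epi, hence $\rho\circ u\neq\mathrm{id}_A$.
\item \textbf{Only one element is shown to move.} That conclusion says \emph{some} element is moved. It does not give $(\rho\circ\iota)(p)\neq p$ for \emph{every} $p$, as PI asserts. An equality $\rho(u(a))=a$ at a single $a$ does not make $u$ a section of $\rho$, so your first case yields no contradiction.
\item \textbf{The case split is incomplete.} The negation of $X<Y$ in Definition~\ref{def:info-preorder} also allows $X\le Y$ to fail outright.
\item \textbf{Surjectivity is never established.} You never show that $\rho$ or $\rho\circ u$ is surjective onto $A$; ``surjective onto its image'' holds for every map. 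So the $\le$ half of strict dominance is never witnessed.
\item \textbf{It cannot be witnessed.} In $\mathrm{GF}(2)$-$\mathbf{Vect}$ every surjection splits, as the paper itself uses in Proposition~\ref{prop:complexity_collapse}. The same holds for functions of sets. So ``surjective and admits no section'' cannot be derived for $\rho$ in the settings the paper computes in.
\end{itemize}

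The elementwise conclusions of PI and CBP are therefore not reachable by your contradiction argument. Only the stipulative reading you list as a fallback goes through, and that is precisely the paper's argument.
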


\begin{remark}[Conservation]
\label{rem:conservation}
TIL applied to the $\mathit{Amount}$ component of $T$ yields
conservation of monetary value. The ledger is append-only and every
entry carries a signed amount. The running total is monotonically
traceable and no entry can be deleted. Reversals append new entries
with negated amounts rather than removing originals.
The dimension $F$ (FundsAvailability) is the observable witness of
this conservation property. It is a boolean predicate evaluated at
the point of execution that gates all value-moving operations. The operational ordering (check $F$, then execute, then write
to $T$) follows from TIL's non-invertibility. Conservation is therefore not a separate axiom but a consequence
of TIL.
\end{remark}

\begin{corollary}[Strict Domination]
\label{cor:strict-domination}
The morphisms constrained by Axioms TIL, CLR, PI, and CBP
exhibit strict Galois domination $GF(x) > x$, which
precludes sections. Domination follows from state advancement
(TIL, PI, CBP) or resource consumption
(CLR). The Galois connection structure follows from~\cite[Section~1.4.3]{FongSpivak} and the strict inequality is established by the axioms.
\end{corollary}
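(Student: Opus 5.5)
The plan is to treat the corollary as a case analysis over the four axioms, after first fixing what ``$GF(x) > x$'' means in each case. For each constrained morphism I will identify an adjoint pair $F \dashv G$ between preorders, in the sense of \cite[Section~1.4.3]{FongSpivak}. The closure $GF$ is then monotone, and the unit of the adjunction gives $x \leq GF(x)$ for free. The whole content is therefore the upgrade from $\leq$ to $<$. By Definition~\ref{def:info-preorder}, strictness means exactly that no section exists, so the two clauses of the statement ``strict domination'' and ``precludes sections'' are one fact: once $GF(x) > x$ holds, the surjection witnessing $x \leq GF(x)$ admits no $\sigma$ with $f\circ\sigma = \mathrm{id}$.

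\textbf{State-advancement cases (TIL, PI, CBP).} For TIL I take $F$ to be the inclusion of a summary into ledger histories, and $G$ to be the outgoing projection out of $T$. Axiom TIL directly asserts that every morphism out of $T$ to a target of lower content is non-injective and admits no section, which is the strict inequality. For PI and CBP I invoke Lemma~\ref{lem:til-instances}. The round trips $\rho\circ\iota$ and $\rho\circ\delta$ have their return leg out of $T$, and the lemma states that they inherit strict advancement. I will check that inheritance is legitimate by a short argument. Suppose the round trip had a section $s$. Then $\iota\circ s$ (respectively $\delta\circ s$) would be a section of $\rho$, contradicting TIL. Hence $(\rho\circ\iota)(p) > p$ and $(\rho\circ\delta)(\ell) > \ell$, which matches the displayed forms of the axioms.

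\textbf{Resource-consumption case (CLR).} Here the adjunction $\iota \dashv \gamma$ is given explicitly by the axiom. The closure satisfies $(\gamma\circ\iota)(c) = \varnothing$ for every $c \neq \varnothing$. I will argue that this closure changes the state of every nonempty consent. Because copying is forbidden ($\nexists\,\Delta$), no morphism can retain $c$ alongside the executed instruction, so no section can restore $c$ from the post-execution state. I expect this step to be the main obstacle. The closure here moves \emph{down} in scope inclusion, while the statement asserts $GF(x) > x$ in the information preorder. I would therefore first reorient the consent preorder, reading consumption as information advancement in the dominance order on states, and only then apply the no-section criterion. If that reorientation fails, I would instead state the CLR case as ``closure is non-identity and sectionless'', which is the part of the conclusion that is actually used downstream.
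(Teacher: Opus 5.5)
Your skeleton matches the paper's. The corollary carries its own justification (Galois structure from Fong--Spivak \S1.4.3, strictness ``by the axioms''), with PI and CBP reduced to TIL through Lemma~\ref{lem:til-instances}. Your composite-section check, $\rho\circ(\iota\circ s)=(\rho\circ\iota)\circ s=\mathrm{id}_P$, is valid, and it supplies the step that the Lemma only asserts.

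\textbf{The CLR case.} The gap here is not a matter of orientation. With $\iota\dashv\gamma$ and scopes ordered by inclusion, the unit you take for free reads $c\subseteq(\gamma\circ\iota)(c)$. Together with $(\gamma\circ\iota)(c)=\varnothing$, this forces $c=\varnothing$. The CLR hypotheses are therefore jointly inconsistent for every nonempty scope, so the clause $GF(x)>x$ can follow only vacuously. Reorienting to $\supseteq$ rewrites the axiom rather than proving it. The reversed order is also still an order on scopes, not the dominance preorder, so Definition~\ref{def:info-preorder} would not turn strictness there into sectionlessness. Your fallback, ``non-identity and sectionless'', is strictly weaker than the statement. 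The no-copying clause concerns $C$ in $\mathcal{M}$, not sections of a map on the scope poset, so it cannot rescue this step. The paper's one-line justification meets the same contradiction without remarking on it. What is missing is a consistent formulation of CLR, not an idea you overlooked.

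\textbf{The TIL case.} This step has a gap you did not flag. Suppose $F$ is an inclusion (an order-embedding) with $F\dashv G$. The counit at $F(x)$ gives $FGF(x)\le F(x)$, order-reflection then gives $GF(x)\le x$, and the unit yields $GF=\mathrm{id}$. So $F$ would be a section of the projection $G$, which is exactly what TIL forbids. Your pair therefore cannot witness $GF(x)>x$, and TIL in fact excludes it. What TIL actually supplies is object-level strictness $X<T$, meaning the projection has no section. That is not an inequality between $x$ and its closure, and you identify the two without argument.

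\textbf{The PI and CBP cases.} The same slide recurs here. You show that the endomorphism $\rho\circ\iota\colon P\to P$ has no section. But Definition~\ref{def:info-preorder} orders objects, so the round trip can at most witness $P\le P$. Its sectionlessness would then give $P<P$, contradicting the irreflexivity used in Theorem~\ref{thm:poset}. The elementwise form $(\rho\circ\iota)(p)>p$ needs an order on elements that neither you nor the paper defines.

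A proof would have to exhibit, case by case, an adjunction whose closure lives in the dominance preorder and is not the identity.
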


\section{The Semantic Basis}
\label{sec:basis}
 
The axioms constrain the morphisms of $\mathbf{B}$, but do not
enumerate the dimensions of the basis. An initial set of candidate
dimensions is postulated from domain expertise across the OBIE and
CDR corpora. The final rank of 14 emerges with the basis evolving
under orthogonalisation pressure.
 
The measurement pipeline (Appendix~\ref{app:pipeline}) extracts a
semantic signal string from each API endpoint and matches it against a set
of regular-expression patterns, one per candidate dimension. Each match records an activation on the binary vector over
$\mathrm{GF}(2)$. Gaussian
elimination over the resulting activation matrix yields the rank. The initial
patterns were derived iteratively from the OBIE and CDR corpora, which
were the first to be analysed. The initial candidate set comprised 14 dimensions enumerated from
domain expertise coded as $\{A, T, P, C, B, D, S, Y, R, F\}$ plus candidates
\texttt{protocol\_meta}, \texttt{id\_ref}, \texttt{universal\_field},
and \texttt{card\_proxy}$\,$.
 
From the initial set of dimensions, the above four candidates did not survive
orthogonality testing of the activation vector:
\texttt{protocol\_meta} was transport metadata, always co-activated with
other dimensions and never independent;
\texttt{id\_ref} was a pointer type and so not a dimension;
\texttt{universal\_field} fired on every endpoint, making it linearly
dependent on the sum of all other columns;
\texttt{card\_proxy} decomposed into payment instruction plus instrument
attribute, dependent on~$P$.
With the removal of these four and the addition of a new candidate for infrastructure metadata $I$ (ServiceDiscovery), the rank was measured as~11. 

\paragraph{Could a much larger and independently developed corpus break that result?}
BIAN Release~12, independently developed by a separate industry body
for institutional and capital markets banking, was introduced against
patterns enhanced with the addition of three anticipated
dimensions for BIAN's broader scope:
$V$ (SecuritiesPosition) tracks holdings with dual valuation independent
of account state;
$L$ (CreditFacility) carries covenants and drawn amounts, categorically
distinct from account balance;
$M$ (MarketPrice) an external markets set observable.
BIAN activated all three as independent dimensions.
The discovery of $V$ and $L$ triggered further refinement of the
patterns, which subsequently detected both in CDR (Section~\ref{sec:coactivation}), thus
narrowing BIAN's unique contribution to M alone.
The rank under the union of all four corpora was measured as~14.
Of the final 14, $I$ (ServiceDiscovery) is domain-generic
infrastructure present in any API manifold, and $M$ (MarketPrice) is an
external feed entering only through BIAN. The remaining 12 are specific
to the transfer of monetary value. The coincidence of initial and final
cardinality is an artefact as the initial and enhanced+refined patterns share only 10
members.
The enhanced+refined patterns were frozen at this point.

To increase BIAN coverage, a separate extended pattern set was
implemented that was used only for BIAN endpoints. The extended patterns map BIAN synonyms to existing 
dimensions and therefore cannot introduce a new column in the
activation matrix. They increase coverage without altering the rank (Section~\ref{sec:benign}),
verifiable by completely disabling the extended patterns~\cite{RANKREPO}.
 
PSD2 (Berlin Group) was introduced as a perturbation test. The frozen patterns
were not further tuned for PSD2 vocabulary. PSD2 achieved rank~11 alone. Its material
effect was to illustrate the method's handling of false positives,
discussed in Section~\ref{sec:perturbation}.

The basis is therefore derived under falsification pressure, not assumed by enumeration. Table~\ref{tab:basis_ranks} summarises the dimensional coverage by
corpus and cumulative rank progression as: OBIE
alone (10); OBIE $\cup$ CDR (13); $+$ PSD2 (13); $+$ BIAN (14). Table~\ref{tab:dimensions} provides a semantic description.

\begin{table}[ht]
\centering
\caption{Dimensional coverage by corpus. No single corpus
witnesses the full rank. PSD2 activates 12 dimensions but achieves
rank~11 due to a spurious linear dependency discussed in
Section~\ref{sec:perturbation}.}
\label{tab:basis_ranks}
\begin{tabular}{lcc}
\toprule
Corpus & Dimensions activated & Rank \\
\midrule
OBIE   & $\{A,T,P,C,B,D,S,Y,R,F\}$ & 10 \\
CDR    & $\{A,T,B,D,S,Y,R,I,L,V\}$ & 10 \\
BIAN   & $\{A,T,P,C,D,S,Y,R,F,I,V,L,M\}$ & 13 \\
PSD2   & $\{A,T,P,C,B,D,S,Y,R,F,I,V\}$ & 11 \\
\midrule
Union & all 14 & 14 \\
\bottomrule
\end{tabular}
\end{table}

\begin{table}[ht]
\centering
\caption{The 14-dimensional semantic basis.}
\label{tab:dimensions}
\small
\begin{tabular}{llll}
\toprule
Symbol & Concept & Semantic payload\\
\midrule
$A$ & AccountState      & Identity, status, currency, type\\
$T$ & TransactionLog    & Immutable ordered event ledger\\
$B$ & BeneficiaryRecord & Trusted third-party endpoint metadata\\
$D$ & DirectDebitMandate& Pull instruction lifecycle (creditor-initiated)\\
$S$ & StandingOrder     & Recurring push instruction\\
$Y$ & PartyIdentity     & Customer demographic, legal person\\
$R$ & ProductDefinition & Bank offer, rates, fees, features\\
$P$ & PaymentInstruction& Intent/instruction to move value\\
$C$ & ConsentRecord     & Scopes, permissions, duration, PSU grant\\
$F$ & FundsAvailability & Point-in-time sufficiency predicate\\
$L$ & CreditFacility    & Limits, covenants, drawn/undrawn amounts\\
$V$ & SecuritiesPosition& Quantity, cost basis, market value of holdings\\
$I$ & ServiceDiscovery  & API endpoint availability and capabilities\\
$M$ & MarketPrice       & External time-series price observable\\
\bottomrule
\end{tabular}
\end{table}

\section{Establishing Rank~14}
\label{sec:witnesses}

Section~\ref{sec:basis} derived 14 dimensions under
orthogonalisation pressure. This section establishes their independence
categorically and defends the result against methodological objections.

\subsection{Pairwise Incomparability}
\label{sec:incomparability}
 
Let $\mathcal{D} = \{A, T, P, C, B, D, S, Y, R, F, I, L, V, M\}$
denote the 14 basis dimensions of the banking manifold. Two dimensions
$i$ and $j$ are \emph{incomparable} when the hom-set
$\mathrm{Hom}_{\mathbf{B}}(i, j)$ does not contain a retraction. A retraction
$r\colon i \to j$ requires a section $s\colon j \to i$ satisfying
$r \circ s = \mathrm{id}_j$; the absence of any such section suffices
to rule it out.
 
The dimensions partition into two tiers according to the mechanism that
precludes sections.
 
\paragraph{Axiom tier.}
Dimensions $T$, $P$, $C$, $F$, and $L$ are governed by the four axioms
of this paper. TIL precludes sections into $T$: the ledger's event
identity and temporal ordering cannot be reconstructed from any summary.
PI precludes sections into $P$: a payment instruction is not recoverable
from its execution record. CLR precludes sections into $C$: consent
scope is orthogonal to both account and the payment state. CBP precludes
sections into $L$: facility state is orthogonal to account state.
Projections may exist between dimensions of the axiom-tier, for example, funds availability $F$ can be evaluated from the transaction log $T$ together with other
state, but TIL guarantees that no such projection admits a section.
 
\paragraph{Precondition tier.}
The dimensions $A$, $B$, $D$, $S$, $Y$, $R$, $V$, $I$, and $M$ carry
state that serves as an independent prerequisite for value transfer.
No surjective morphism between distinct precondition-tier dimensions
exists in the Information Dominance Preorder: the state of each is
not recoverable from the state of any other. The universal network
API boundary witnesses this separation, as each dimension's state is
maintained and addressed independently across every standard banking
API examined. No additional axioms are required because the preorder
already precludes retractions.
 
Table~\ref{tab:incomparability} verifies the absence of retractions for
the most architecturally proximate pairs in each tier as the cases where
a retraction would be more plausible if one existed.
 
\begin{table}[ht]
\centering
\small
\begin{tabular}{@{}llll@{}}
\toprule
Dim & Nearest Candidate & Retraction Blocked By & Tier \\
\midrule
$T$ & (any) & event identity unrecoverable from any summary (TIL) & axiom \\
$P$ & $T$ & instruction unrecoverable from execution record (PI) & axiom \\
$C$ & $A$, $P$ & consent scope orthogonal to account and payment (CLR) & axiom \\
$L$ & $A$ & facility state orthogonal to account (CBP) & axiom \\
$F$ & $T$, $L$, $A$ & projections exist but admit no section (TIL) & axiom \\
\midrule
$A$ & $T$ & account metadata orthogonal to event history & precondition \\
$B$ & $A$ & beneficiary identity independent of account state & precondition \\
$D$ & $T$ & mandate parameters unrecoverable from transactions & precondition \\
$S$ & $T$ & instruction state unrecoverable from transactions & precondition \\
$Y$ & $A$ & party identity orthogonal to account & precondition \\
$R$ & $A$ & product definition orthogonal to account & precondition \\
$V$ & $A$ & position state orthogonal to account & precondition \\
$I$ & (any) & infrastructure metadata orthogonal to banking state & precondition \\
$M$ & (any) & external feed orthogonal to internal banking state & precondition \\
\bottomrule
\end{tabular}
\caption{Retraction absence for proximate dimension pairs. Each row
identifies the nearest candidate dimension from which a retraction might
be expected, and the mechanism that precludes it. Axiom-tier retractions
are blocked by formal constraints; precondition-tier retractions are
blocked by structural independence at the network API boundary.}
\label{tab:incomparability}
\end{table}
 
\subsection{The Activation Matrix}
\label{sec:activation}
 
\begin{definition}[Activation Space and Pure Signals]
\label{def:activation}
For each operation $o$ in a corpus, the \emph{activation vector}
$v(o) \in \mathrm{GF}(2)^{14}$ has $v(o)_i = 1$ if and only if
operation $o$ activates dimension $i$. The \emph{activation matrix}
$\mathcal{M}$ is the matrix whose rows are these vectors. An operation
is a \emph{pure signal} for dimension $i$ if $v(o) = e_i$, the
standard basis vector of Hamming weight one. A pure signal isolates a
single structural dimension within the ambient category $\mathbf{B}$.
\end{definition}
 
\begin{observation}[Corpus Confirmation]
\label{obs:pure-signal}
Filtering the empirical corpus of standard banking APIs isolates
pure-signal witnesses for all 14 dimensions.
Table~\ref{tab:gf2_matrix_full} (Appendix~\ref{app:matrix}) presents
the activation matrix with columns permuted to expose one pure-signal
endpoint per dimension.
\end{observation}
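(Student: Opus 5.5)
The Observation is empirical, so the argument is constructive and computational rather than deductive. For each dimension $i \in \mathcal{D}$ I will exhibit one operation $o_i$ from the union corpus (OBIE $\cup$ CDR $\cup$ PSD2 $\cup$ BIAN) with $v(o_i) = e_i$. Once these fourteen rows are found, permuting columns so that $o_i$ sits in row $i$ puts the identity matrix $I_{14}$ in the top block of $\mathcal{M}$. Hence $\operatorname{rank}_{\mathrm{GF}(2)}\mathcal{M} \ge 14$, and since there are only 14 columns the rank is exactly 14. No Gaussian elimination is needed for this bound. The pure-signal rows are themselves the elimination certificate, which is what Table~\ref{tab:gf2_matrix_full} displays.

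The steps, in order, are as follows.

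\begin{enumerate}
\item Run the frozen enhanced+refined pattern set of Section~\ref{sec:basis} over all 4{,}590 endpoints to produce $\mathcal{M}$. I will keep the BIAN extended patterns switchable, so the result can be checked with them disabled.
\item Filter $\mathcal{M}$ to the rows of Hamming weight one and bucket them by support.
\item For each $i$, select a representative, preferring the corpus where $i$ is native. Table~\ref{tab:basis_ranks} tells us where to look: $M$ only in BIAN, $B$ in OBIE, CDR or PSD2 (BIAN does not activate $B$), $I$, $L$, $V$ in CDR or BIAN, and the remaining dimensions in OBIE.
\item Confirm each representative semantically against Table~\ref{tab:dimensions}, so that a pure signal is not merely an artefact of a pattern that fires alone by accident.
\end{enumerate}

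The hardest step will be finding pure signals for the dimensions that tend to co-activate. $F$ usually appears alongside $P$ (a funds check inside a payment flow), $C$ alongside $P$ or $A$, and $T$ alongside $A$ (nested transaction resources under accounts). My first move is to look for dedicated endpoints:
\begin{itemize}
\item the OBIE/PSD2 funds-confirmation resource for $F$;
\item the consent-creation or consent-status endpoints for $C$;
\item transaction-list endpoints whose signal string omits account metadata for $T$.
\end{itemize}
If no pure row exists for some $i$, the fallback is weaker but still suffices for rank. I will exhibit a lower-triangular arrangement: rows whose supports, under a suitable column order, have leading entry at $i$ with all other ones among earlier columns. This still gives full rank. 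The pure-signal claim itself would then have to be qualified for that dimension.

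Finally, I will cross-check the result against Section~\ref{sec:incomparability}. The existence of a pure signal for every $i$ matches the categorical incomparability argued there, and it also rules out the PSD2-style spurious dependency of Section~\ref{sec:perturbation}: a column possessing a pure witness cannot lie in the span of the others.
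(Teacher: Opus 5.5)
Your proposal is correct and takes essentially the paper's route: the paper supports this Observation by running the frozen-pattern pipeline and exhibiting one Hamming-weight-one row per dimension as the column-permuted identity block of Table~\ref{tab:gf2_matrix_full} (see Remark~\ref{rem:identity}). One bookkeeping slip: the 4{,}590 endpoints and Table~\ref{tab:gf2_matrix_full} come from OBIE $\cup$ BIAN $\cup$ CDR only, so PSD2 is not in that count; including it does no harm to the rank, but its known false positives (Section~\ref{sec:perturbation}) make it a poor source of witnesses, which your semantic check in step~4 would catch.
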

 
\begin{theorem}[Pure Signal Consistency]
\label{thm:pure-signal-consistency}
The set of pure signal operations
$\mathcal{S} \subset \mathrm{Mor}(\mathbf{B})$ projects to a linearly
independent set in the measurement space $V = \mathrm{GF}(2)^{14}$.
\end{theorem}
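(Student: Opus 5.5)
The plan is to reduce the statement to elementary linear algebra over $\mathrm{GF}(2)$, using Definition~\ref{def:activation}. By definition, a pure signal $o$ for dimension $i$ satisfies $v(o) = e_i$. So the projection of $\mathcal{S}$ into $V = \mathrm{GF}(2)^{14}$ is the image set
\[
v(\mathcal{S}) = \{\, v(o) : o \in \mathcal{S} \,\} \subseteq \{e_1, \dots, e_{14}\}.
\]
Here the projection is read as a set of vectors, so duplicates collapse: several pure signals for the same dimension all map to the same $e_i$. I would state this reading explicitly. Read as a multiset, the statement fails as soon as a dimension has two pure-signal endpoints.

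\textbf{Independence.} Any subset of the standard basis is linearly independent. Suppose $\sum_{i \in J} c_i e_i = 0$ with $c_i \in \mathrm{GF}(2)$. The $i$-th coordinate of the left-hand side is $c_i$, so every $c_i = 0$. Hence $v(\mathcal{S})$ is linearly independent.

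\textbf{Spanning.} Observation~\ref{obs:pure-signal} supplies a pure-signal witness for each of the 14 dimensions. So $v(\mathcal{S})$ is exactly $\{e_1, \dots, e_{14}\}$, a basis of $V$. This gives the full strength of the statement: the activation matrix $\mathcal{M}$, which contains these rows, has rank at least 14, and hence exactly 14 because $\mathcal{M}$ has 14 columns. This is the empirical rank-14 claim.

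\textbf{Main obstacle.} The step I expect to be delicate is not the algebra but the well-definedness of the map $\mathcal{S} \to V$. The activation vector $v(o)$ is defined on corpus operations via the regex pipeline, while $\mathcal{S}$ is declared a subset of $\mathrm{Mor}(\mathbf{B})$. I would handle this by taking the projection to be $v$ restricted to operations that occur in the corpus. One must then check that "pure signal for $i$" is the same as "$v(o) = e_i$", with no extra categorical content. The incomparability results of Section~\ref{sec:incomparability} serve only as interpretation and are not needed for the linear-algebraic conclusion.
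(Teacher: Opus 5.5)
Your proposal is correct and follows the paper's proof: each pure signal maps to some $e_i$, and any set of standard basis vectors of $\mathrm{GF}(2)^{14}$ is linearly independent. The paper adds a consistency remark that the absence of retractions between dimensions (Section~\ref{sec:incomparability}) is necessary for pure signals to exist, which you rightly treat as inessential; your set-versus-multiset caveat and your spanning step (really the content of Theorem~\ref{thm:rank14}) are harmless refinements beyond what the paper states.
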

 
\begin{proof}
By Definition~\ref{def:activation}, each pure signal $o_i$ satisfies
$v(o_i) = e_i$. The set $\{e_1, \ldots, e_{14}\}$ is the standard
basis of $\mathrm{GF}(2)^{14}$ and is therefore linearly independent.
 
It remains to verify that the existence of pure signals is consistent
with the ambient category. If a retraction $r\colon i \to j$ admitting
a section existed in $\mathbf{B}$, every operation activating $i$ would
necessarily also activate $j$, precluding $v(o_i) = e_i$ for $i \neq j$.
Section~\ref{sec:incomparability} establishes that such a retraction
does not exist. The categorical incomparability of all 14 dimensions is
therefore a necessary condition for pure signals, and
Observation~\ref{obs:pure-signal} confirms that they are realised in
practice.
\end{proof}

\subsection{Establishing Rank~14}
\label{sec:rank14}

\begin{theorem}[Rank~14]
\label{thm:rank14}
The 14 basis dimensions of $\mathcal{D}$ are linearly
independent over $\mathrm{GF}(2)$.
\end{theorem}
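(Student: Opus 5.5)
The plan is to reduce the statement to a rank computation on the activation matrix $\mathcal{M}$ of Definition~\ref{def:activation}. Linear independence of the 14 dimensions over $\mathrm{GF}(2)$ will mean that the 14 columns of $\mathcal{M}$, taken over the union corpus, are linearly independent. Equivalently, the row space of $\mathcal{M}$ is all of $\mathrm{GF}(2)^{14}$, so $\operatorname{rank}\mathcal{M} = 14$. Since $\mathcal{M}$ has exactly 14 columns, the rank is at most 14. The proof therefore only needs the lower bound $\operatorname{rank}\mathcal{M}\ge 14$.

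For the lower bound I will use Observation~\ref{obs:pure-signal}. The union corpus contains, for each $i\in\mathcal{D}$, an operation $o_i$ with $v(o_i)=e_i$. These 14 rows form a $14\times 14$ identity submatrix of $\mathcal{M}$ after the column permutation shown in Table~\ref{tab:gf2_matrix_full}. By Theorem~\ref{thm:pure-signal-consistency} the set $\{v(o_i)\}$ is linearly independent, so it spans $\mathrm{GF}(2)^{14}$. Hence no nontrivial $\mathrm{GF}(2)$-combination of columns vanishes: if $\sum_j \lambda_j \mathrm{col}_j = 0$, then evaluating at row $o_i$ gives $\lambda_i=0$ for every $i$.

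The step I expect to be the main obstacle is justifying that these pure signals are genuine, rather than artefacts of the pattern set. Two threats need to be addressed. The first is a spurious dependency, as in the PSD2 case of Section~\ref{sec:perturbation}. The second is coverage inflated by the BIAN extended patterns. For the second, I would argue that the extended patterns only remap synonyms onto existing columns (Section~\ref{sec:benign}). They therefore cannot create a new column, and I would check that the pure-signal witnesses survive with the extended patterns disabled. For consistency with the ambient category $\mathbf{B}$, I would cite Section~\ref{sec:incomparability}: a retraction between two dimensions would force co-activation and so destroy the pure signal, and Table~\ref{tab:incomparability} rules such retractions out.

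Finally, I would cross-check the result against an independent Gaussian elimination over the full 4{,}590-row matrix. Elimination should return 14 pivots, agreeing with the union row of Table~\ref{tab:basis_ranks} and with the cumulative progression $10\to13\to13\to14$. Together with the pure-signal lower bound and the trivial upper bound, this completes the proof.
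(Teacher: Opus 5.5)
Your argument is correct and is essentially the paper's proof: the pure-signal rows of Observation~\ref{obs:pure-signal} put all fourteen standard basis vectors into the row space, giving $\operatorname{rank}\mathcal{M}\ge 14$, and the column count gives the matching upper bound. The extra material on extended patterns, incomparability and the Gaussian-elimination cross-check concerns the empirical soundness of that observation rather than the deduction itself, and the paper's proof does without it.
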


\begin{proof}
The activation matrix $\mathcal{M}$ contains all 14 standard
basis vectors $e_1, \ldots, e_{14}$ as rows
(Observation~\ref{obs:pure-signal}). The row space therefore
contains the full standard basis of $\mathrm{GF}(2)^{14}$,
giving $\mathrm{rank}(\mathcal{M}) \ge 14$. The column count
gives $\mathrm{rank}(\mathcal{M}) \le 14$. The bounds coincide.
\end{proof}

\subsection{Falsifiability}
Any investigator supplying a verifiable linear dependency between two
columns of the activation matrix directly refutes the rank~14 claim. Pattern refinement can increase dimensional coverage within a
corpus, but cannot alter the rank.

The remainder of this section defends these results against methodological objections.

\subsection{Dark Endpoints}
\label{sec:dark}
 
\begin{definition}[Dark Endpoint]
\label{def:dark-endpoint}
An endpoint is \emph{dark} if the measurement operators do not activate any dimension.
\end{definition}
 
OBIE and CDR have zero dark endpoints. BIAN with the frozen plus extended patterns contains 2{,}155 dark
endpoints (48\%) lying outside the monetary value transfer scope;
under frozen patterns alone this rises to 2{,}799 (62\%;
Appendix~\ref{app:pipeline:decompose}).
 
\subsection{Co-activation and Measurement Scope}
\label{sec:coactivation}
A dimension that always co-activates with an existing
dimension is undetectable by the current measurement operator.
If either such dimension was not detected, no dark endpoint would appear because the co-activating endpoint
already matches an existing pattern. The rank~14 result is
therefore a property of the measurement operator, as it
establishes the dimension of the image under the current
projection, not the dimension of the domain.

\subsection{Benign Activations}
\label{sec:benign}
A second, clearly separated pattern set, the extended patterns, maps
BIAN's institutional vocabulary to existing dimensions and therefore
cannot introduce a new column in the activation matrix. Disabling
the extended patterns entirely reduces BIAN's dimensional coverage
from 52\% to 38\% but leaves all rank computations unchanged. The
extended patterns are a coverage instrument, as they increase activation
counts without contributing pivots.

\subsection{Perturbation Test}
\label{sec:perturbation}
PSD2 (Berlin Group) was introduced as a perturbation test against the
frozen patterns. The patterns were not tuned for PSD2
vocabulary, and some activations are false positives. For example, $D$ matches
``mandated'' in Strong Customer Authentication descriptions; $V$ matches
\texttt{securitiesAccount} in a shared link schema, and $I$ matches
``status'' in transaction endpoints. These inflate the activation count
and introduce a linear dependency $Y = A \oplus C \oplus B$ over
$\mathrm{GF}(2)$ in the PSD2 sub-matrix (witnessed by the bulk-party-verification-endpoint), reducing the rank from~12 to~11.

The perturbation test is asymmetric. False positives can only
activate existing dimensions, reducing the apparent rank through
spurious dependencies. They cannot fabricate a new independent
dimension or increase the rank.


\section{The Universal Quotient}
\label{sec:quotient}

Sections~\ref{sec:basis}--\ref{sec:witnesses} derived 14
independent dimensions and established their rank. This section
constructs the quotient object that the opening question asked for
and shows that every morphism of $\mathbf{B}$ factors through it.

\subsection{The Ambient Category}
\label{sec:fragmented}

\begin{definition}[The Ambient Category $\mathbf{B}$]
\label{def:ambient-category}
The ambient category $\mathbf{B}$ of typed banking domains has:
\begin{itemize}
  \item \textbf{Objects.} Typed operation spaces $V_i$ with elements
    $(o, \tau)$ where $o$ is the operation identifier and $\tau$ is
    its type signature.
  \item \textbf{Morphisms.} Type-preserving functions
    $f \colon V_i \to V_j$; bilateral adaptors, schema mappings,
    and API gateways are concrete realisations.
  \item \textbf{Monoidal Product.} The monoidal product $\otimes$
    is a bifunctor on $\mathbf{B}$ such that
    $X \otimes Y \geq X$ and $X \otimes Y \geq Y$ in the
    Information Dominance Preorder: the product carries at least
    as much information as either factor.
  \item \textbf{Monoidal Unit.} $\mathbf{1}$ is the trivial
    operation carrying no banking state, with a canonical
    morphism $\rho \colon A \otimes \mathbf{1} \to A$ for all
    objects $A$. We write $\mathbf{1}$ for the unit to
    distinguish it from dimension~$I$ (ServiceDiscovery).
\end{itemize}
The direct sum $V = \bigoplus_i V_i$ collects all institutional
spaces. The activation matrix $\mathcal{M}$
(Definition~\ref{def:activation}) is the image of $V$ under
the activation map $\pi$.
\end{definition}

\subsection{The Quotient Construction}
\label{sec:quotient_exists}
\begin{samepage}
\begin{definition}[Equivalence Relation on Operations]
\label{def:equiv-relation}
Let $\pi \colon V \to \mathrm{GF}(2)^{14}$ be the activation map
(Definition~\ref{def:activation}). Define $\sim$ on
$V = \bigoplus_i V_i$ by
\[
o_1 \sim o_2 \Longleftrightarrow \pi(o_1) = \pi(o_2)
\]
\end{definition}
\end{samepage}
\begin{samepage}
\begin{definition}[Universal Quotient]
\label{def:universal-quotient}
The \emph{kernel pair} of $\pi$ is
\[
R = \{(o_1, o_2) \in V \times V : \pi(o_1) = \pi(o_2)\}
\]
with coordinate projections $r_1, r_2 \colon R \rightrightarrows V$.
The \emph{empirical universal quotient} for $\mathbf{B}$ is the
coequaliser of this pair~\cite[Chapter~III, \S3]{MacLane}.
\end{definition}
\end{samepage}

\begin{samepage}
\begin{proposition}[Realisation of the Quotient]
\label{prop:realisation}
The quotient with
quotient map $\varphi \colon V \to Q_{\mathrm{public}}$ induced
by $\pi$ is
\[ Q_{\mathrm{public}} = \mathrm{im}(\pi) \cong
V / \ker(\pi) \quad \text{in } \mathrm{GF}(2)\text{-}\mathbf{Vect}.
\]
\end{proposition}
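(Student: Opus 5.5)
The plan is to build the coequaliser of the kernel pair $r_1, r_2 \colon R \rightrightarrows V$ of Definition~\ref{def:universal-quotient} by hand, and then to identify it with $\mathrm{im}(\pi)$ and with $V/\ker(\pi)$. Throughout, $\pi$ is treated as a $\mathrm{GF}(2)$-linear map. On the free $\mathrm{GF}(2)$-span of the operations, the activation vector of a formal sum is taken to be the sum of the activation vectors, so $\pi$ is linear. Linearity is what lets us pass from the set-level relation $\sim$ of Definition~\ref{def:equiv-relation} to a subspace.

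\textbf{Step 1.} Observe that $o_1 \sim o_2$ holds if and only if $\pi(o_1 - o_2) = 0$, that is, $o_1 - o_2 \in \ker(\pi)$. So the equivalence classes of $\sim$ are exactly the cosets of $\ker(\pi)$. The set $V/{\sim}$ is therefore the vector space $V/\ker(\pi)$, and the canonical projection $q \colon V \to V/\ker(\pi)$ is linear.

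\textbf{Step 2.} Show that $q$ is the coequaliser of $r_1, r_2$ in $\mathrm{GF}(2)\text{-}\mathbf{Vect}$.

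\emph{It coequalises.} For $(o_1,o_2) \in R$ we have $o_1 - o_2 \in \ker(\pi)$, so $q r_1 = q r_2$.

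\emph{It is universal.} Let $g \colon V \to W$ be linear with $g r_1 = g r_2$. For any $k \in \ker(\pi)$, the pair $(k, 0)$ lies in $R$, so $g(k) = g(0) = 0$. Hence $\ker(\pi) \subseteq \ker(g)$, and $g$ factors uniquely through $q$. Uniqueness holds because $q$ is surjective.

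\textbf{Step 3.} Apply the first isomorphism theorem to $\pi$. The induced map $\bar\pi \colon V/\ker(\pi) \to \mathrm{im}(\pi)$ is a linear bijection. Setting $\varphi := \bar\pi \circ q$ recovers $\pi$ with its codomain corestricted, which is the quotient map ``induced by $\pi$''. This gives $Q_{\mathrm{public}} = \mathrm{im}(\pi) \cong V/\ker(\pi)$.

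As a consistency check, Theorem~\ref{thm:rank14} shows that the rows realise all of $e_1,\dots,e_{14}$. So $\mathrm{im}(\pi) = \mathrm{GF}(2)^{14}$, and $Q_{\mathrm{public}}$ has dimension $14$.

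\textbf{Main obstacle.} The hard part is Step~1: justifying that $V = \bigoplus_i V_i$ is a $\mathrm{GF}(2)$-vector space and that $\pi$ is linear. Definition~\ref{def:ambient-category} presents the $V_i$ as typed operation spaces whose elements are pairs $(o,\tau)$, not as vector spaces. My first approach is to take the free $\mathrm{GF}(2)$-vector space on the operations and extend the activation map of Definition~\ref{def:activation} linearly. Then the kernel pair in $\mathbf{Set}$ and the kernel pair in $\mathrm{GF}(2)\text{-}\mathbf{Vect}$ determine the same congruence, and the coequaliser is computed in $\mathrm{GF}(2)\text{-}\mathbf{Vect}$, as the statement specifies.

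Without this linearisation the set-theoretic coequaliser would only be $V/{\sim} \cong \mathrm{im}(\pi)$ as sets. That is still a bijection, via the same universal argument in $\mathbf{Set}$, but it carries no vector-space structure. I would therefore state the linear extension explicitly as the convention under which the proposition is proved.
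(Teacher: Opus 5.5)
Your proposal is correct and follows the paper's route: the paper's entire proof is a one-line appeal to the first isomorphism theorem for $\pi$, which is your Step~3. Your Steps~1--2 identify the coequaliser of the kernel pair with $V/\ker(\pi)$, using $(k,0)\in R$ for $k\in\ker(\pi)$. Together with your explicit convention that $V$ is the free $\mathrm{GF}(2)$-space on the operations with $\pi$ extended linearly, they supply what the paper leaves implicit in passing from Definition~\ref{def:universal-quotient} to the displayed isomorphism.
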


\begin{proof}
The first isomorphism theorem applied to the activation map
$\pi$ (Definition~\ref{def:activation}).
\end{proof}
\end{samepage}

\begin{theorem}[Factorisation]
\label{thm:factorisation}
Every linear map in $\mathrm{GF}(2)\text{-}\mathbf{Vect}$ out of
the global operation space $V$ that respects $\ker(\pi)$ admits
epi-mono factorisation through $Q_{\mathrm{public}}$.
\end{theorem}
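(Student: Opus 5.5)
The argument combines the universal property of the cokernel of $\ker(\pi)$ with the standard image factorisation in $\mathrm{GF}(2)\text{-}\mathbf{Vect}$. Fix a linear map $g \colon V \to W$ with $\ker(\pi) \subseteq \ker(g)$; this is what ``respects $\ker(\pi)$'' should mean, equivalently $g$ is constant on the classes of $\sim$ from Definition~\ref{def:equiv-relation}. By Proposition~\ref{prop:realisation} we may take $Q_{\mathrm{public}} = \mathrm{im}(\pi) \cong V/\ker(\pi)$, with $\varphi \colon V \to Q_{\mathrm{public}}$ the corestriction of $\pi$. Since $\varphi$ is surjective, it is an epimorphism in $\mathrm{GF}(2)\text{-}\mathbf{Vect}$.

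The first step is to produce the comparison map. Because $\mathbf{Vect}$ is abelian, the coequaliser of the kernel pair $r_1, r_2 \colon R \rightrightarrows V$ in Definition~\ref{def:universal-quotient} agrees with the cokernel of $\ker(\pi) \hookrightarrow V$. Indeed, $g \circ r_1 = g \circ r_2$ holds exactly when $g(o_1 - o_2) = 0$ for every pair with $\pi(o_1) = \pi(o_2)$, and that is the same as requiring $g$ to vanish on $\ker(\pi)$. The universal property therefore yields a unique linear map $\bar g \colon Q_{\mathrm{public}} \to W$ with $g = \bar g \circ \varphi$. Concretely, $\bar g(\pi(o)) = g(o)$, which is well defined precisely because of the hypothesis on $g$.

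The second step turns this into an epi-mono factorisation. Here $\bar g$ need not be injective, so I would factor it in turn as $\bar g = m \circ e'$, with $e' \colon Q_{\mathrm{public}} \twoheadrightarrow \mathrm{im}(\bar g)$ and $m \colon \mathrm{im}(\bar g) \hookrightarrow W$. Then $g = m \circ (e' \circ \varphi)$, where $e = e' \circ \varphi$ is an epimorphism because it is a composite of surjections, and $m$ is a monomorphism. Since $\mathrm{im}(\bar g) = \mathrm{im}(g)$, this is the canonical image factorisation of $g$, and it passes through $Q_{\mathrm{public}}$ via $\varphi$. Uniqueness up to unique isomorphism follows from the orthogonality of epis and monos in $\mathbf{Vect}$.

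The main obstacle is definitional rather than computational. The statement treats $V$ as a vector space and $\pi$ as linear, whereas Definition~\ref{def:activation} defines $\pi$ only on operations. I would first extend $\pi$ linearly from the basis of operations, taking $V$ to be the free $\mathrm{GF}(2)$-space on the operations, so that $\ker(\pi)$ and the kernel-pair coequaliser make sense. I would also state explicitly that ``through $Q_{\mathrm{public}}$'' means the epi part factors as $e' \circ \varphi$. In the degenerate case where $\bar g$ is already injective, the factorisation is simply $g = \bar g \circ \varphi$ with $Q_{\mathrm{public}}$ itself as the image.
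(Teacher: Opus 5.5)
Your argument is correct, and its first step is the paper's own: both use the universal property of the coequaliser of the kernel pair to obtain $g = \bar g \circ \varphi$.

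The paper's proof then stops. It only observes that $\pi$ itself factors as the quotient epimorphism $\varphi$ followed by the inclusion $\mathrm{im}(\pi) \hookrightarrow \mathrm{GF}(2)^{14}$. For a general $g$ it never checks that the mediating map $\bar g$ is a monomorphism, and it need not be: take $g = 0$. Your second step, factoring $\bar g$ through $\mathrm{im}(\bar g) = \mathrm{im}(g)$ to get $g = m \circ (e' \circ \varphi)$, supplies exactly the missing monomorphism. Your remark that $Q_{\mathrm{public}}$ is itself the middle object only when $\bar g$ is injective, equivalently $\ker g = \ker(\pi)$, shows the theorem holds for every $g$ only when read as ``the epi part factors through $\varphi$''.

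You also make precise two points the paper leaves implicit. First, respecting the kernel pair is the same as vanishing on $\ker(\pi)$, via the identification of the coequaliser with the cokernel of $\ker(\pi) \hookrightarrow V$. Second, $\pi$ must first be extended linearly to a genuine vector space $V$ before $\ker(\pi)$ makes sense.

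The paper's version is shorter and yields the specific factorisation of $\pi$ that it uses later. Yours yields a statement that actually holds for every map in the hypothesis.
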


\begin{proof}
The quotient $Q_{\mathrm{public}}$ is defined exactly as the coequaliser of the kernel pair of the activation map (Definition~\ref{def:universal-quotient}). The universal property of the coequaliser guarantees that every linear map respecting the kernel pair factors uniquely through this quotient object. The activation map $\pi$ respects its own kernel pair. Therefore, it immediately factors as the quotient epimorphism $e$ followed by the inclusion monomorphism $m$. This epi-mono factorisation is a direct structural consequence of the coequaliser definition. Interpreting this vector space factorisation for domain morphisms within the ambient category $\mathbf{B}$ requires the endomorphic lifting mechanism.
\end{proof}

\paragraph{Endomorphic lifting.}
The operations of the banking domain are local morphisms
$f \colon V_i \to V_j$ in the ambient category $\mathbf{B}$.
These operations are not global linear maps out of the fibre
space $V$. Factorisation for these domain operations requires a
functorial bridge to the quotient space. The composite
\[
\bar{f} = \varphi \circ f \circ s_i \colon Q_{\mathrm{public}} \to Q_{\mathrm{public}}
\]
provides this bridge. It lifts the local
domain operation to a global endomorphism on the quotient. The
map $s_i \colon Q_{\mathrm{public}} \to V_i$ is a local section of
the restricted quotient map $\varphi|_{V_i}$. The section $s_i$
selects a distinct coset representative in $V_i$ for each
equivalence class in $Q_{\mathrm{public}}$. It is a section of the
quotient map alone. It is not a section of any inter-dimensional
morphism. Code review, compliance certification, and regulatory
audit operate as the intersubjective mechanisms
(Section~\ref{sec:epistemology_constraints}). These
human-coordinated mechanisms guarantee the monomorphism remains
faithful to reality.

\begin{remark}[Categorical Scope]
\label{rem:categorical-scope}
Interpreting $Q_{\mathrm{public}}$ as an object of
$\mathbf{B}$ assumes that the codification is faithful to the
domain (Section~\ref{sec:epistemology_constraints}). This
paper demonstrates that faithfulness through convergence on the same quotient by independently developed corpora (Section~\ref{par:convergence}).
\end{remark}
\subsection{Poset Structure and Thinness}
\label{sec:poset}

\begin{theorem}[Poset Structure]
\label{thm:poset}
The Information Dominance Preorder on $\mathbf{B}$ is a poset.
\end{theorem}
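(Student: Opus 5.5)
The plan is to verify the three poset axioms for the relation $X \leq Y$ of Definition~\ref{def:info-preorder}. Reflexivity and transitivity are formal consequences of the morphisms of $\mathbf{B}$ being type-preserving functions (Definition~\ref{def:ambient-category}). All the content lies in antisymmetry. Antisymmetry cannot hold on the nose for arbitrary objects, so I would prove it on the skeleton of $\mathbf{B}$: objects are identified when they are isomorphic in $\mathbf{B}$. This is the standard reading under which a preorder category "is" a poset. I would state this convention explicitly at the start of the proof.

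\emph{Reflexivity.} The identity $\mathrm{id}_X \colon X \to X$ is a type-preserving surjection, so $X \leq X$.

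\emph{Transitivity.} Suppose $f \colon Y \twoheadrightarrow X$ and $g \colon Z \twoheadrightarrow Y$. Then $f \circ g \colon Z \to X$ is a type-preserving function. It is surjective because a composite of surjections is surjective. Hence $X \leq Z$.

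\emph{Antisymmetry.} Suppose $f \colon Y \twoheadrightarrow X$ and $g \colon X \twoheadrightarrow Y$. I would use finiteness of the typed operation spaces: each $V_i$ is a set of pairs $(o,\tau)$ drawn from a finite corpus of endpoints (4{,}590 in total). The two surjections give $|X| \geq |Y|$ and $|Y| \geq |X|$, so $|X| = |Y|$. A surjection between finite sets of equal cardinality is a bijection, so $f$ is bijective.

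Next I check that $f^{-1}$ is a morphism of $\mathbf{B}$. Since $f$ preserves the type signature $\tau$ of each element, $f^{-1}$ also sends each element to one with the same signature. Hence $f^{-1}$ is type-preserving. So $f$ is an isomorphism in $\mathbf{B}$, and $X = Y$ in the skeleton. Consequently, strict dominance $X<Y$ in the sense of Definition~\ref{def:info-preorder} can never hold in both directions. This is consistent with Corollary~\ref{cor:strict-domination}.

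The main obstacle is the finiteness step. If some object were infinite, mutual surjections would not force a bijection: for example, $\mathbb{N}$ surjects onto $\mathbb{N}\sqcup\mathbb{N}$ and conversely. In that case I would fall back on a Cantor--Schr\"oder--Bernstein style argument, but that argument produces a bijection that need not be type-preserving. I would therefore restrict explicitly to the finite objects realised by the corpora, which are exactly the objects appearing in the activation matrix $\mathcal{M}$. I would remark that this is the scope in which the paper's empirical claims are made.
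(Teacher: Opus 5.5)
Your argument is correct for what it proves, namely antisymmetry up to isomorphism, but it takes a different route from the paper. The paper asserts literal equality $X=Y$ and splits a putative cycle into two cases. If some leg is axiom-tier, strict domination (Corollary~\ref{cor:strict-domination}) composes around the cycle to give $X<X$. If every leg is a read-only surjection, mutual surjections between finite-dimensional $\mathrm{GF}(2)$-spaces are isomorphisms, and a retraction between distinct basis dimensions is then excluded by Theorem~\ref{thm:rank14}. Your cardinality argument is essentially the first half of that second case. You carry it out in typed finite sets, which is what Definition~\ref{def:ambient-category} actually specifies, rather than in $\mathrm{GF}(2)\text{-}\mathbf{Vect}$. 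It also makes the first case redundant: every leg of a cycle is forced to be a bijection, so no leg lacking a section can lie on one. You therefore need neither the axioms nor the empirical rank. The paper's extra rank step only serves to keep the 14 basis dimensions as 14 distinct points. For any other mutually dominating pair, for instance $\mathbf{1}\otimes A$ and $A$ in Proposition~\ref{prop:left-unitor}, the paper's argument also delivers only an isomorphism, so your explicit passage to the skeleton is the defensible form of the statement. One step should be tightened. Inferring that $f^{-1}$ is a morphism because it is type-preserving assumes that $\mathbf{B}$ contains \emph{all} type-preserving functions. But in that full model every type-preserving surjection of finite typed sets splits, so strict dominance never occurs and the axioms asserting it cannot hold. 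If $\mathbf{B}$ is not full, build the inverse inside $\mathbf{B}$. With $k=|X|=|Y|$ and $n=k!$, the permutations satisfy $(g\circ f)^n=\mathrm{id}_Y$ and $(f\circ g)^n=\mathrm{id}_X$. Then $h=(g\circ f)^{n-1}\circ g$ is a composite of morphisms of $\mathbf{B}$ with $h\circ f=\mathrm{id}_Y$ and $f\circ h=(f\circ g)^n=\mathrm{id}_X$. Finally, you only need each object to be finite, not to occur in $\mathcal{M}$. Your narrower restriction would exclude the tensor products to which Section~\ref{sec:skew} applies this theorem.
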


\begin{proof}
A cycle among distinct objects either contains an axiom-tier
morphism or it does not.

If it does, at least one leg strictly advances the information
dominance preorder (Corollary~\ref{cor:strict-domination}).
The remaining legs satisfy $X \leq Y$
(non-strict). Composing around the cycle yields $X < X$ by
transitivity, contradicting irreflexivity.

If it does not, every leg is a read-only surjection. A cycle
of surjections $X \geq Y \geq X$ between distinct objects
gives surjections in both directions between spaces of equal
finite dimension. In
$\mathrm{GF}(2)\text{-}\mathbf{Vect}$, such surjections are
isomorphisms, so mutual surjectivity yields a retraction.
A retraction between distinct basis dimensions collapses the
rank. This contradicts Theorem~\ref{thm:rank14}.

Therefore, cycles do not exist among distinct objects. If
$X \leq Y$ and $Y \leq X$ then $X = Y$, since the
alternative contradicts acyclicity. The preorder is
antisymmetric and therefore a poset.
\end{proof}

\begin{remark}[Definite Causal Order]
\label{rem:causal-order}
The poset condition is Malament's
past-and-future distinguishing
property~\cite{Malament1977}: two objects with
identical information pasts and futures are identical.
Gogioso et al.~\cite[Section~2]{gogioso2020} adopt
this condition as their abstract model of causal order.
\end{remark}

\begin{theorem}[Thinness]
\label{thm:thin}
The Information Dominance Preorder on $\mathbf{B}$ is a thin
category.
\end{theorem}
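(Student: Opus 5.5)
The natural route is to view the Information Dominance Preorder as a category $\mathcal{P}$ in the standard way. Its objects are the objects of $\mathbf{B}$, and there is an arrow $X \to Y$ exactly when $X \leq Y$ in the sense of Definition~\ref{def:info-preorder}. Thinness means every hom-set has at most one element, so it is enough to check two things: that this assignment really is a category, and that no second arrow can arise between the same pair of objects.

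\textbf{Step 1: category axioms.} Reflexivity $X \leq X$ comes from the surjection $\mathrm{id}_X$, which supplies the identity arrow. Transitivity comes from composition: if $g \colon Z \twoheadrightarrow Y$ and $f \colon Y \twoheadrightarrow X$ are surjective, then $f \circ g \colon Z \twoheadrightarrow X$ is surjective, so $X \leq Z$. This supplies composition in $\mathcal{P}$. Associativity and the unit laws then hold automatically, because once hom-sets are subsingletons any two parallel composites coincide. The argument is therefore not circular, provided Step~2 is established first.

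\textbf{Step 2: hom-sets are subsingletons.} We define $\mathrm{Hom}_{\mathcal{P}}(X,Y)$ as the truth value of the proposition $X \leq Y$, not as the set of surjections $Y \twoheadrightarrow X$ in $\mathbf{B}$. Several distinct surjections $Y \twoheadrightarrow X$ may exist in $\mathbf{B}$, but they all witness the same relation and so collapse to a single arrow. This is the usual thin category attached to any preorder. If the intended meaning is the stronger one, that the witnessing morphisms themselves are unique, I would argue on the quotient $Q_{\mathrm{public}}$ instead. Via Theorem~\ref{thm:factorisation} and endomorphic lifting, any two witnessing surjections $f, f' \colon Y \twoheadrightarrow X$ between basis dimensions induce the same map on activation classes. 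This is because $\pi$ depends only on which dimensions are activated, and Theorem~\ref{thm:rank14} makes those coordinates independent.

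\textbf{Step 3: combining with Theorem~\ref{thm:poset}.} Antisymmetry adds that the only isomorphisms in $\mathcal{P}$ are identities. So $\mathcal{P}$ is a skeletal thin category, which is the categorical form of a poset.

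The main obstacle is Step~2 under the stronger reading, that parallel witnesses in $\mathbf{B}$ coincide. This is false in $\mathrm{GF}(2)\text{-}\mathbf{Vect}$ in general: for example, a space of dimension at least two has several distinct surjections onto $\mathrm{GF}(2)$. For that reason I would commit to the standard preorder-as-category reading and state explicitly that thinness is a property of the relation $\leq$, not of $\mathrm{Hom}_{\mathbf{B}}$.
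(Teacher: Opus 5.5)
Your argument is correct, but it reaches thinness by a different and lighter route than the paper. The paper's proof invokes Theorem~\ref{thm:poset} and then the fact that a poset, regarded as a category, has at most one arrow between any two objects. Antisymmetry is its key lemma.

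You instead note that thinness is built into the preorder-as-category construction. Once reflexivity (from $\mathrm{id}_X$) and transitivity (from composing surjections) are checked, each hom-set is the truth value of $X \leq Y$, so it has at most one element, and the category laws follow automatically. Theorem~\ref{thm:poset} enters your proof only in Step~3, to upgrade the thin category to a skeletal one.

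Your route buys generality and independence. Antisymmetry plays no role in thinness, so the result holds for the preorder as such. It does not rest on Theorem~\ref{thm:poset} or on the ingredients of that proof (Corollary~\ref{cor:strict-domination}, Theorem~\ref{thm:rank14}). The paper's route adds nothing to thinness itself. Its only extra content is the poset packaging it ties to causal order in Remark~\ref{rem:causal-order}, and you recover that in Step~3 anyway.

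Your caveat that thinness is a property of the relation $\leq$, not of $\mathrm{Hom}_{\mathbf{B}}$, is also the correct scope for the paper's proof, which likewise only bounds arrows of the poset-as-category. Your $\mathrm{GF}(2)^2 \twoheadrightarrow \mathrm{GF}(2)$ example shows that parallel $\mathbf{B}$-morphisms need not coincide. That matters wherever thinness is later used to make diagrams of actual $\mathbf{B}$-morphisms commute (Propositions~\ref{obs:S1} and~\ref{thm:coherence}).

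The one part to delete is the fallback in Step~2 for the stronger reading. It does not work, and it contradicts your own final paragraph. Suppose two surjections $f, f' \colon Y \twoheadrightarrow X$ did induce the same map on activation classes. After endomorphic lifting that gives only $\varphi \circ f \circ s_i = \varphi \circ f' \circ s_i$, not $f = f'$ in $\mathbf{B}$. Moreover, Theorem~\ref{thm:rank14} concerns the independence of the 14 activation columns and gives no control over how two different morphisms act on activation vectors. Since you commit to the relational reading anyway, your proof is complete without that sentence.
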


\begin{proof}
The preorder is a poset (Theorem~\ref{thm:poset}). A poset
has at most one morphism between any pair of
objects~\cite[Section~1.4.3]{Awodey2010}.
\end{proof}

\subsection{What the Quotient Explains}
\label{sec:assembly}

\begin{proposition}[Complexity Collapse]
\label{prop:complexity_collapse}
Let $n$ be the number of institutional services and $k$ the mean
bilateral connectivity (average number of direct private adaptors per
service). Typing $n$ institutional services against
$Q_{\mathrm{public}}$ collapses the integration complexity from
$O(n \cdot k)$ bilateral adaptors to exactly $O(n)$ projections.
\end{proposition}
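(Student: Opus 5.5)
The plan is to prove Proposition~\ref{prop:complexity_collapse} by counting adaptors in two architectures and comparing the counts. The comparison rests on the factorisation of Theorem~\ref{thm:factorisation} together with endomorphic lifting.

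\textbf{Step 1: the bilateral baseline.} Model the integration landscape as a directed graph on the $n$ institutional services. Each direct private adaptor is an edge, realised as a morphism $f\colon V_i \to V_j$ of $\mathbf{B}$ (Definition~\ref{def:ambient-category}). With mean bilateral connectivity $k$, the edge count is $\sum_i \deg(i) = n k$, so the baseline cost is $\Theta(nk)$ adaptors. In the worst, fully connected case $k = n-1$ and this becomes $O(n^2)$.

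\textbf{Step 2: the quotient architecture.} Assign to each service $V_i$ exactly one projection $\varphi|_{V_i}\colon V_i \to Q_{\mathrm{public}}$, the restriction of the quotient map of Proposition~\ref{prop:realisation}. Assign also one local section $s_i\colon Q_{\mathrm{public}} \to V_i$, as in the endomorphic lifting paragraph. The total is $2n = O(n)$ typed maps, independent of $k$. I must then show that every bilateral adaptor $f\colon V_i\to V_j$ is recoverable from these maps, up to the equivalence $\sim$ of Definition~\ref{def:equiv-relation}. The candidate is the composite $s_j \circ \bar f \circ \varphi|_{V_i}$, where $\bar f = \varphi\circ f\circ s_i$. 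By Theorem~\ref{thm:factorisation}, $\varphi$ is epi onto $Q_{\mathrm{public}}$ and $\bar f$ is determined on equivalence classes. Hence the composite agrees with $f$ modulo $\ker(\pi)$. No per-pair adaptor then needs to be stored: the pairwise routing is carried by the single shared object $Q_{\mathrm{public}}$, whose dimension 14 (Theorem~\ref{thm:rank14}) is a constant independent of $n$.

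\textbf{Step 3: the main obstacle.} The hard part is the claim of \emph{exactly} $O(n)$. Two gaps have to be closed.

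First, the endomorphisms $\bar f$ themselves must not reintroduce a dependence on $k$. Each $\bar f$ is a linear map on $\mathrm{GF}(2)^{14}$, so there are at most $2^{14\cdot 14}$ of them. That is a constant, so they add $O(1)$ regardless of how many bilateral edges they represent.

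Second, recovery holds only modulo $\ker(\pi)$, so I must argue that this loses nothing relevant to interoperability. I would appeal to Remark~\ref{rem:categorical-scope}: whatever $\pi$ does not observe is, by the codification, not part of the shared routing contract. Under that scope restriction the count is $n$ projections plus $n$ sections plus constantly many endomorphisms, which is $O(n)$, against the $\Theta(nk)$ baseline of Step~1.
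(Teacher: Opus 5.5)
Your count of $n$ projections plus $n$ sections is the paper's count. However, the one substantive step in the paper's proof, namely why the sections exist, is missing from your proposal, and the sections you use are stronger than what can exist. The paper's argument is this: $\varphi\colon V\to Q_{\mathrm{public}}$ is a surjective linear map over the field $\mathrm{GF}(2)$, so it splits. This gives a global $s$ with $\varphi\circ s=\mathrm{id}_{Q_{\mathrm{public}}}$, and the $s_i$ are simply the components of $s$. You instead take from the endomorphic-lifting paragraph a section $s_i\colon Q_{\mathrm{public}}\to V_i$ of the restriction $\varphi|_{V_i}$ for every service. That requires $\varphi|_{V_i}$ to be onto $Q_{\mathrm{public}}$, which fails in general. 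No single corpus reaches rank~14 (Table~\ref{tab:basis_ranks}), so an individual service's image $\varphi(V_i)$ is typically a proper subspace. The paper's component sections do not rescue you either. They satisfy $\sum_i\varphi|_{V_i}\circ s_i=\mathrm{id}_{Q_{\mathrm{public}}}$, not $\varphi|_{V_j}\circ s_j=\mathrm{id}$ for each $j$, and your recovery formula needs the latter.

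Your recovery claim also goes beyond what the paper proves, since its proof stops at the count, and it has a second hole. Unwinding $\varphi\circ s_j\circ\bar f\circ\varphi|_{V_i}=\varphi\circ f$ on $V_i$, you need $f(s_i(\varphi(x)))\sim f(x)$. That is, $f$ must carry $\sim$-equivalent operations to $\sim$-equivalent ones. This is the \emph{hypothesis} of Theorem~\ref{thm:factorisation}, not its conclusion, and epi-ness of $\varphi$ does not supply it. The paper in fact says explicitly that domain operations $f\colon V_i\to V_j$ are not global linear maps out of $V$, which are what that theorem covers. A correct version would:
\begin{enumerate}
\item split each $\varphi|_{V_i}\colon V_i\twoheadrightarrow\varphi(V_i)$ onto its image, by the same field-splitting argument;
\item define $\bar f$ on $\varphi(V_i)$;
\item assume explicitly that adaptors respect $\ker(\pi)$.
\end{enumerate}
With those changes your computation goes through.

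Two smaller points in Step~3. First, the bound $2^{14\cdot14}$ presumes $f$ is linear, whereas morphisms of $\mathbf{B}$ are only type-preserving functions; the bound $(2^{14})^{2^{14}}$ is still constant, so this is easily fixed. Second, bounding the number of \emph{distinct} $\bar f$ does not remove the per-edge choice of which $\bar f$ serves the pair $(i,j)$. Your ``no per-pair data'' conclusion therefore holds only in a cost model that counts maps rather than wiring.
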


\begin{proof}
The quotient map $\varphi \colon V \to Q_{\mathrm{public}}$ is a
surjective linear map in $\mathrm{GF}(2)\text{-}\mathbf{Vect}$.
Every surjective linear map between vector spaces over a field
splits. A global section
$s \colon Q_{\mathrm{public}} \to V$ therefore exists satisfying
$\varphi \circ s = \mathrm{id}_{Q_{\mathrm{public}}}$. The local
sections $s_i \colon Q_{\mathrm{public}} \to V_i$ are the
coordinate projections of this global section. Each service
requires exactly one projection $\varphi_i$ to map into the
quotient and one local section $s_i$ to map out.
\end{proof}

\paragraph{Complexity in practice.}
In current practice, $n$ institutional services carry a
$O(n^2)$ burden to establish the bilateral connectivity needed
for interoperability, manifested as average $k$ private adaptors
per service. Each adaptor independently re-implements the four
axioms and independently absorbs the associator
non-invertibility, collectively maintaining an $O(n \cdot k)$
estate. Endomorphic lifting concentrates these constraints
exactly once in the quotient (Proposition~\ref{prop:complexity_collapse}).

\paragraph{Convergence as empirical signal.}
\label{par:convergence}
OBIE and BIAN were developed independently by separate industry
bodies with uncoordinated requirements and no shared design
vocabulary. Neither consortium articulated a dimensional structure.
The convergence of both on the same quotient under the four axioms
is the primary categorical signal: the axioms constrain the
morphisms of $\mathbf{B}$ so tightly that any operationally
coherent implementation of monetary value transfer produces the
same $Q_{\mathrm{public}}$.

\section{Left Skew Monoidal Structure}
\label{sec:skew}

The quotient $Q_{\mathrm{public}}$ exists and factorisation is
established. The remaining question is what monoidal structure
$Q_{\mathrm{public}}$ carries. The Information Dominance Preorder
is a thin category (Theorem~\ref{thm:thin}). All five
Szlach\'{a}nyi conditions~\cite{Szlachanyi2012} follow.

\begin{proposition}[Non-triviality of the Associator (S1)]
\label{obs:S1}
The associator $\alpha$ is not the identity natural
transformation on $\mathbf{B}$.
\end{proposition}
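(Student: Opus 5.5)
The associator $\alpha_{X,Y,Z}\colon (X\otimes Y)\otimes Z \to X\otimes(Y\otimes Z)$ of a left skew monoidal structure is only required to be a natural transformation, not an isomorphism. So it suffices to exhibit one triple of objects of $\mathbf{B}$ for which $\alpha_{X,Y,Z}$ is not an identity. An identity requires equal source and target, so there are two ways to succeed: show the two bracketings are distinct objects, or show that any comparison morphism between them fails to be invertible. I will take the second route, since non-invertibility gives non-identity immediately.

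First, I will choose the triple from the axiom tier, taking $X = P$, $Y = T$, $Z = C$, since these are the dimensions whose behaviour is fixed by axioms. By the monoidal product clause of Definition~\ref{def:ambient-category}, both bracketings dominate each factor in the Information Dominance Preorder. Next, I will track where $T$ sits in each bracketing.
\begin{itemize}
\item In $(P\otimes T)\otimes C$, the ledger $T$ is first paired with the instruction $P$, so this object is the domain of the execution round trip of Axiom PI.
\item In $P\otimes(T\otimes C)$, the ledger is first paired with the consent record, and this pairing is governed by Axiom CLR.
\end{itemize}

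The central step is to argue that $\alpha_{P,T,C}$ factors through a morphism out of $T$, namely the regrouping that moves $T$ from its pairing with $P$ to its pairing with $C$. TIL then says this morphism strictly advances the preorder and admits no section. By Corollary~\ref{cor:strict-domination}, $\alpha_{P,T,C}$ therefore has no section, so it is not invertible and in particular is not $\mathrm{id}$. As a cross-check I would also invoke CLR: an inverse associator composed with $\alpha$ would reproduce the consent component $C$ after it had been consumed, which is a covert copy $C\to C\otimes C$ and is forbidden by CLR.

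The main obstacle is this central step: justifying that the associator really does involve a nontrivial morphism out of $T$ or out of $C$, rather than being a mere rebracketing. Definition~\ref{def:ambient-category} does not specify $\otimes$ concretely, so I must fix its interpretation: $X\otimes Y$ is the typed operation space of operations acting on both factors, with the inner pairing executed first. Only under that reading is regrouping a change of execution order, to which PI and CLR apply. I would state this interpretation explicitly. Failing a cleaner argument, I would fall back on the empirical witnesses of Observation~\ref{obs:pure-signal}: the activation vectors of operations realising the two bracketings differ, so by Proposition~\ref{prop:realisation} their images in $Q_{\mathrm{public}}$ differ, and $\alpha$ cannot be the identity.
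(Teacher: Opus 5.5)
Your central step is where the proof breaks. Nothing in Definition~\ref{def:ambient-category} makes $\alpha_{P,T,C}$ factor through a morphism out of $T$. The ``regrouping'' you name has a product containing $T$ as its domain, not $T$ itself, so TIL as stated in Section~\ref{sec:constraints} does not reach it. You also concede that the factorisation only exists under a reading of $\otimes$ that the paper never adopts.

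Even granting a factorisation $\alpha=g\circ f\circ h$ with $f$ out of $T$ and section-free, the conclusion does not follow. A section $s$ of $\alpha$ yields only the section $f\circ h\circ s$ of the last leg $g$, not a section of $f$. For instance, an inclusion $X\to X\oplus Y$ with $Y\neq 0$ has no section, yet followed by the projection it gives $\mathrm{id}_X$. Corollary~\ref{cor:strict-domination} concerns the constrained morphisms themselves, not composites passing through them.

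The paper uses no factorisation. It invokes TIL in a strengthened form: every surjection out of a product involving $T$ admits no section. This is asserted in its proof with reference to Table~\ref{tab:incomparability}, and it is stronger than TIL as literally stated. The paper takes the triple $(P,A,T)$ and argues that a section of the surjection $(P\otimes A)\otimes T\to P\otimes(A\otimes T)$ would have to invert such a surjection, because the right bracketing contains $A\otimes T$. This yields $P\otimes(A\otimes T)<(P\otimes A)\otimes T$. The two bracketings are then distinct objects of the poset (Theorem~\ref{thm:poset}), and by thinness (Theorem~\ref{thm:thin}) the associator component is not an identity.

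With that premise your own triple works without reinterpreting $\otimes$. Grant, as the paper does for its triple, that $\alpha_{P,T,C}$ is a surjection. It is then itself a surjection out of a product containing $T$, hence has no section, and your ``non-invertible, hence not the identity'' finish is valid.

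Your back-ups do not close the gap. The CLR cross-check conflates invertibility with copying: $\alpha^{-1}\circ\alpha$ is an endomorphism of $(P\otimes T)\otimes C$ that uses $C$ once, and no map $C\to C\otimes C$ can be extracted from it.

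The empirical fallback points the wrong way. An activation vector records only which of the 14 dimensions an operation touches, and rebracketing does not change that set. An operation realising either bracketing therefore activates the same set $\{P,T,C\}$ and is sent to $e_P+e_T+e_C$. By Proposition~\ref{prop:realisation}, the two bracketings then have the \emph{same} image in $Q_{\mathrm{public}}$. Observation~\ref{obs:pure-signal} concerns single-dimension pure signals and says nothing about products.

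Your side remarks are also inaccurate, though not load-bearing. The PI round trip $\rho\circ\iota$ has domain $P$, not $(P\otimes T)\otimes C$. CLR constrains copying of $C$, not a pairing $T\otimes C$.
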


\begin{proof}
The monoidal product satisfies
$X \otimes Y \geq X$ and $X \otimes Y \geq Y$
(Definition~\ref{def:ambient-category}), so both bracketings
$(P \otimes A) \otimes T$ and $P \otimes (A \otimes T)$ are
upper bounds of $\{P, A, T\}$ in the Information Dominance
Preorder. Dimension $T$ is axiom-tier: TIL guarantees that
every surjection out of a product involving $T$ strictly
advances the preorder and admits no section
(Table~\ref{tab:incomparability}). A surjection from the
left bracketing to the right exists, giving
$P \otimes (A \otimes T) \leq (P \otimes A) \otimes T$.
The reverse requires a section that recovers the
intermediate grouping $(P \otimes A)$ from the right
bracketing. Because the right bracketing contains the
sub-product $A \otimes T$, any such section must invert a
surjection out of a product involving $T$, which TIL
prohibits. The inequality is therefore strict:
$P \otimes (A \otimes T) < (P \otimes A) \otimes T$.
The two bracketings are distinct objects in the poset
(Theorem~\ref{thm:poset}). In a thin category
(Theorem~\ref{thm:thin}) the unique morphism between
distinct objects is not an identity. Therefore
$\alpha \neq \mathrm{id}$ at the component $(P, A, T)$,
which suffices for the natural transformation.
\end{proof}

\begin{proposition}[Non-invertibility of the Associator (S2)]
\label{obs:S2}
The associator $\alpha$ is non-invertible in $\mathbf{B}$.
\end{proposition}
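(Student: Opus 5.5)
The plan is to prove S2 at the component $(P,A,T)$ already singled out in Proposition~\ref{obs:S1}, reusing the strict inequality established there. Non-invertibility of a natural transformation follows from non-invertibility of a single component. So it suffices to show that
\[
\alpha_{P,A,T} \colon (P \otimes A) \otimes T \to P \otimes (A \otimes T)
\]
has no two-sided inverse in $\mathbf{B}$.

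First, I would recall from the proof of Proposition~\ref{obs:S1} that the left bracketing surjects onto the right, so that
\[
P \otimes (A \otimes T) < (P \otimes A) \otimes T
\]
strictly in the Information Dominance Preorder. Second, I would argue by contradiction: suppose $\beta$ is an inverse of $\alpha_{P,A,T}$. Then $\alpha_{P,A,T} \circ \beta = \mathrm{id}$, so $\beta$ is a section of the surjection $\alpha_{P,A,T}$. By Definition~\ref{def:info-preorder}, strict dominance means exactly that no such section exists. Equivalently, $\beta$ is itself a surjection from the right bracketing onto the left, which gives the reverse inequality $(P \otimes A) \otimes T \leq P \otimes (A \otimes T)$. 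Theorem~\ref{thm:poset} then forces the two bracketings to be equal as objects, contradicting their distinctness from Proposition~\ref{obs:S1}. Either route yields the contradiction. As a cross-check, Theorem~\ref{thm:thin} says the thin category has at most one arrow in each direction, and an isomorphism between distinct objects of a poset would violate antisymmetry.

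The main obstacle is the step that identifies $\alpha_{P,A,T}$ with the unique preorder arrow witnessing the surjection. Non-invertibility in $\mathbf{B}$ is a statement about the ambient category, while the strictness argument lives in the thin preorder. I would bridge the two by invoking Theorem~\ref{thm:thin}: any morphism between the two bracketings that realises the dominance is the unique such arrow, so an inverse in $\mathbf{B}$ would project to an arrow in the opposite direction. I would state explicitly that the section-free property of Axiom TIL, applied to the sub-product $A \otimes T$, is the ingredient that excludes this reverse arrow, mirroring the final step of Proposition~\ref{obs:S1}.
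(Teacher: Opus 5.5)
Your proposal is correct and is essentially the paper's argument: the paper likewise takes the strict inequality $P \otimes (A \otimes T) < (P \otimes A) \otimes T$ from Proposition~\ref{obs:S1} and notes that strictness in the poset of Theorem~\ref{thm:poset} rules out a morphism in the reverse direction, so $\alpha^{-1}$ cannot exist. Your antisymmetry route is that step spelled out, and since any inverse in $\mathbf{B}$ is already a surjection, that route does not need the thin-category identification you flag as the main obstacle.
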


\begin{proof}
Proposition~\ref{obs:S1} establishes
$P \otimes (A \otimes T) < (P \otimes A) \otimes T$.
Strict inequality in the poset (Theorem~\ref{thm:poset})
precludes a morphism in the reverse direction.
Therefore $\alpha^{-1}$ does not exist.
\end{proof}

\begin{proposition}[Invertibility of the Left Unit ($\lambda$)]
\label{prop:left-unitor}
The left unit map $\lambda \colon \mathbf{1} \otimes A \to A$
is a natural isomorphism in $\mathbf{B}$.
\end{proposition}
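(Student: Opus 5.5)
The plan is to work entirely inside the thin category of Theorem~\ref{thm:thin}, where invertibility of $\lambda$ amounts to showing that $\mathbf{1}\otimes A$ and $A$ are the same object of the poset (Theorem~\ref{thm:poset}). In that case the unique morphism between them is the identity, and the identity is trivially invertible. Naturality will then come for free: in a thin category every square of parallel composites commutes, because there is at most one morphism between any two objects.

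\textbf{Step 1: $A \le \mathbf{1}\otimes A$.} By Definition~\ref{def:ambient-category}, the monoidal product satisfies $X\otimes Y \ge Y$. Taking $X=\mathbf{1}$ and $Y=A$ gives $\mathbf{1}\otimes A \ge A$. Equivalently, there is a surjection $\mathbf{1}\otimes A \twoheadrightarrow A$, and I take this surjection to realise the component $\lambda_A$.

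\textbf{Step 2: $\mathbf{1}\otimes A \le A$.} For the reverse inequality I use that $\mathbf{1}$ carries no banking state, so its activation vector under $\pi$ is zero. Hence $\pi(\mathbf{1}\otimes A)$ and $\pi(A)$ coincide in $\mathrm{GF}(2)^{14}$ (Definition~\ref{def:equiv-relation}), and the two objects have equal finite dimension over $\mathrm{GF}(2)$. A surjection between spaces of equal finite dimension is an isomorphism, which is the same argument used in the second case of Theorem~\ref{thm:poset}. Its inverse supplies a surjection $A \twoheadrightarrow \mathbf{1}\otimes A$.

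\textbf{Step 3: conclude.} The no-section clauses of Corollary~\ref{cor:strict-domination} do not obstruct this. The unit is not axiom-tier, so no leg of the cycle strictly advances the preorder, and TIL, CLR, PI and CBP are never invoked. Antisymmetry (Theorem~\ref{thm:poset}) then forces $\mathbf{1}\otimes A = A$ as objects of the poset, so $\lambda_A$ is the identity and hence an isomorphism. Naturality in $A$ follows from thinness. This also makes the contrast with Proposition~\ref{obs:S1} explicit: there, the presence of $T$ in a bracketing made the inequality strict, whereas here no axiom-tier factor is introduced.

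\textbf{Main obstacle.} The hardest part is Step 2, namely justifying that tensoring with $\mathbf{1}$ adds no information. This needs $\dim(\mathbf{1}\otimes A)=\dim A$, which is not stated outright in Definition~\ref{def:ambient-category}. My first attempt will derive it from the unit carrying ``no banking state'' together with the activation map. If that fails, I will fall back on the Szlach\'{a}nyi convention and take the left unitor to be the identity on the underlying typed operation space, since the unique element of $\mathbf{1}$ contributes no coordinate.
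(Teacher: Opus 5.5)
Your skeleton matches the paper's proof: $\mathbf{1}\otimes A \ge A$ from the product clause of Definition~\ref{def:ambient-category}, $\mathbf{1}\otimes A \le A$ from the unit carrying no banking state, antisymmetry (Theorem~\ref{thm:poset}) to identify the two objects, and thinness (Theorem~\ref{thm:thin}) to make the unique morphisms in each direction mutual inverses. The genuine problem is your derivation for Step~2, which does not go through.

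The activation map $\pi$ is defined on operations, and nothing in the paper says how it interacts with $\otimes$. So ``$\pi(\mathbf{1}\otimes A)=\pi(A)$'' is not available. Even granting it, equal activation images say nothing about dimension, because $\pi$ is far from injective: thousands of endpoints land in $\mathrm{GF}(2)^{14}$, which is the whole reason $Q_{\mathrm{public}}\cong V/\ker(\pi)$ is a proper quotient. You therefore never show that the surjection $\lambda_A$ is an isomorphism, and the reverse surjection does not follow.

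A sanity check confirms the mechanism cannot be right. Nothing in it uses that $\mathbf{1}$ sits on the \emph{left}, so the same argument would make $\rho\colon A\otimes\mathbf{1}\to A$ invertible. That contradicts Proposition~\ref{obs:S3}, where the paper has $A < A\otimes\mathbf{1}$.

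The paper makes no such derivation. It takes $\mathbf{1}\otimes A \le A$ as immediate from the clause of Definition~\ref{def:ambient-category} that $\mathbf{1}$ carries no banking state, and merely records the left/right asymmetry by pointing to Proposition~\ref{obs:S3}. Cite that clause for the inequality to repair Step~2. Do not use your fallback of declaring $\lambda$ to be the identity, since that presupposes the conclusion. With Step~2 repaired, your Steps~1 and~3 coincide with the paper's proof, and your remark that naturality follows from thinness is a harmless addition.

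Separately, drop the aside in Step~3 that the axioms never enter because $\mathbf{1}$ is not axiom-tier. What matters is the tier of $A$. For $A=T$, the component $\lambda_T$ is a surjection out of a product involving $T$, which is exactly the kind of map the proof of Proposition~\ref{obs:S1} says strictly advances the preorder and admits no section. The paper's proof says nothing about tiers, and yours should not add a claim that conflicts with the paper's own S1 reasoning.
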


\begin{proof}
The monoidal unit $\mathbf{1}$ carries no banking state
(Definition~\ref{def:ambient-category}). The product
$\mathbf{1} \otimes A$ therefore carries no information
beyond $A$ itself: $\mathbf{1} \otimes A \leq A$ in the
Information Dominance Preorder. The product inequality
$\mathbf{1} \otimes A \geq A$
(Definition~\ref{def:ambient-category}) gives the reverse.
The two inequalities yield
$\mathbf{1} \otimes A = A$ in the poset
(Theorem~\ref{thm:poset}). The tensor product is not
symmetric: $A \otimes \mathbf{1} \neq \mathbf{1} \otimes A$
in general (Proposition~\ref{obs:S3}). In a thin category
(Theorem~\ref{thm:thin}) the unique morphism
$\lambda \colon \mathbf{1} \otimes A \to A$ and the unique
morphism $A \to \mathbf{1} \otimes A$ are mutual inverses.
\end{proof}

\begin{proposition}[Non-invertibility of the Right Unit (S3)]
\label{obs:S3}
The right unit map $\rho \colon A \otimes \mathbf{1} \to A$ is
non-invertible in $\mathbf{B}$.
\end{proposition}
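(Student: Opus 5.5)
The plan mirrors the proofs of Propositions~\ref{obs:S1} and~\ref{obs:S2}. I will exhibit one object $A$ for which $A \otimes \mathbf{1}$ and $A$ are \emph{distinct} objects of the Information Dominance Preorder with $A < A \otimes \mathbf{1}$. Thinness (Theorem~\ref{thm:thin}) and antisymmetry (Theorem~\ref{thm:poset}) then rule out any morphism $A \to A \otimes \mathbf{1}$. Since $\rho$ is a natural transformation, failure of invertibility at a single component suffices.

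First, the product inequality of Definition~\ref{def:ambient-category} gives $A \otimes \mathbf{1} \geq A$. The canonical morphism $\rho \colon A \otimes \mathbf{1} \to A$ is the surjection witnessing this.

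Second, I must show the inequality is strict, i.e.\ that $\rho$ admits no section. I would take the component $A = T$, as in Proposition~\ref{obs:S1}. Then $\rho_T \colon T \otimes \mathbf{1} \to T$ is a surjection out of a product involving $T$. By Axiom TIL, as invoked in the proof of Proposition~\ref{obs:S1} and Table~\ref{tab:incomparability}, such a surjection strictly advances the preorder and has no section. Hence $T < T \otimes \mathbf{1}$.

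Third, strictness plus antisymmetry makes $T \otimes \mathbf{1}$ and $T$ distinct, and strict inequality precludes a morphism in the reverse direction, exactly as in Proposition~\ref{obs:S2}. So $\rho_T^{-1}$ does not exist, and $\rho$ is non-invertible.

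The main obstacle is the second step, for two reasons. First, I have to justify that TIL applies to $T \otimes \mathbf{1}$ even though $\mathbf{1}$ carries no banking state. A naive reading would make $T \otimes \mathbf{1}$ informationally the same as $T$, which is precisely the argument Proposition~\ref{prop:left-unitor} uses for the \emph{left} unit. Second, I have to keep the argument from also proving the left unitor non-invertible. My first attempt would rest the asymmetry on the direction of the tensor. In $A \otimes \mathbf{1}$ the ledger factor sits on the left, so it is the factor from which the product is built. TIL's non-injectivity clause, applied to the product space $T = \mathit{Amount} \times \cdots$ extended by a trailing factor, then forbids reconstructing the extended record from $T$ alone. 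In $\mathbf{1} \otimes A$, by contrast, the unit is absorbed before $A$ enters, so no ledger extension occurs.

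If this asymmetry cannot be made precise from the stated definitions, the fallback is weaker. Since the ambient category's unit is only equipped with the canonical $\rho$, and not with a structure map in the reverse direction, I would argue that no such inverse is among the morphisms of $\mathbf{B}$. This would likely need an explicit convention added to Definition~\ref{def:ambient-category}.
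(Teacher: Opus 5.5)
Your second step fails, and the whole proof rests on it. Axiom TIL constrains morphisms $f\colon T\to X$ out of the ledger into targets of lower information content: no summary of $T$ reconstructs $T$. The map $\rho_T\colon T\otimes\mathbf{1}\to T$ has domain $T\otimes\mathbf{1}$ and the full ledger as codomain. TIL therefore says nothing about it unless you already know that $T$ carries less information than $T\otimes\mathbf{1}$, and that is exactly the inequality $T<T\otimes\mathbf{1}$ you are trying to establish.

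Even under the broadened reading used in the proof of Proposition~\ref{obs:S1} (every surjection out of a product involving $T$ admits no section), the argument proves too much. The map $\lambda_T\colon\mathbf{1}\otimes T\to T$ is equally a surjection out of a product involving $T$. The same reasoning would deny $\lambda_T$ a section and so contradict Proposition~\ref{prop:left-unitor}, since the inverse of an isomorphism is a section.

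You anticipated this, but your remedy has no support. Definition~\ref{def:ambient-category} gives $\otimes$ only bifunctoriality, the bounds $X\otimes Y\geq X$ and $X\otimes Y\geq Y$, and a unit carrying no banking state. None of these distinguishes the left slot from the right. Moreover, a trailing factor with no banking state adds no ledger content for TIL to protect. If anything it supports $T\otimes\mathbf{1}=T$ by the reasoning of Proposition~\ref{prop:left-unitor}.

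The paper does not route strictness through TIL or through a chosen component. It argues, for arbitrary $A$, that $A\otimes\mathbf{1}$ carries a pairing structure that $A$ alone lacks, that $\rho$ discards this pairing, and that the pairing cannot be reconstructed from $A$. This yields $A<A\otimes\mathbf{1}$ directly. A putative inverse would then be a section of $\rho$ (via $\rho\circ\rho^{-1}=\mathrm{id}_A$), which is the contradiction. That last step needs neither antisymmetry nor thinness, and your third step should be phrased the same way: strict inequality excludes sections of $\rho_T$, not arbitrary morphisms $T\to T\otimes\mathbf{1}$ of $\mathbf{B}$.

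Be aware, however, that the pairing-structure claim is not derivable from Definition~\ref{def:ambient-category} either. It is exactly the left/right asymmetry you could not extract, entered as a premise about what the right-hand product records. Your fallback, an explicit convention distinguishing the right unit, is thus the candid version of the paper's argument. Your TIL-based main line cannot be repaired.
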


\begin{proof}
The monoidal product satisfies
$A \otimes \mathbf{1} \geq A$ in the information dominance
preorder (Definition~\ref{def:ambient-category}): the product
carries the pairing structure that $A$ alone does not. The
right unitor $\rho$ discards this pairing. The original
composition cannot be reconstructed from $A$ alone, so $\rho$
strictly advances the information preorder:
$A < A \otimes \mathbf{1}$. Strict advancement means no
section exists (Definition~\ref{def:info-preorder}). Assume
$\rho^{-1} \colon A \to A \otimes \mathbf{1}$ exists. Then
$\rho^{-1} \circ \rho = \mathrm{id}_{A \otimes \mathbf{1}}$,
constituting a section of $\rho$. This contradicts the
absence of sections under strict advancement. Therefore
$\rho^{-1}$ does not exist.
\end{proof}

\begin{proposition}[Coherence (S4, S5)]
\label{thm:coherence}
The pentagon (S4) and mixed triangle (S5) coherence conditions
hold in $\mathbf{B}$.
\end{proposition}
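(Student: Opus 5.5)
The coherence conditions are equations between parallel composites of structure maps. The approach is therefore to reduce both S4 and S5 to the fact that the Information Dominance Preorder is a thin category (Theorem~\ref{thm:thin}). In a thin category any two parallel morphisms coincide, so every diagram built from morphisms of the preorder commutes automatically. The work consists in checking that every edge of the pentagon and of the mixed triangle really is a morphism of the preorder, i.e.\ witnessed by a surjection in the direction of the arrow. It also requires that the two composites being compared share a source and target.

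\textbf{Step 1 (the pentagon, S4).} For objects $W,X,Y,Z$ of $\mathbf{B}$, take the five vertices $((W\otimes X)\otimes Y)\otimes Z$, $(W\otimes(X\otimes Y))\otimes Z$, $W\otimes((X\otimes Y)\otimes Z)$, $W\otimes(X\otimes(Y\otimes Z))$ and $(W\otimes X)\otimes(Y\otimes Z)$. The five edges are $\alpha$, $\alpha\otimes Z$, $W\otimes\alpha$ and two further components of $\alpha$. I will note that each component of $\alpha$ is a morphism of the preorder, using the direction fixed in Proposition~\ref{obs:S1}. Since $\otimes$ is a bifunctor (Definition~\ref{def:ambient-category}), whiskered components $\alpha\otimes Z$ and $W\otimes\alpha$ are again morphisms. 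The two paths around the pentagon are then parallel morphisms between the same pair of objects, and they agree by thinness.

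\textbf{Step 2 (the mixed triangle, S5).} The triangle compares $\rho_X\otimes Y$ with the composite
\[
(X\otimes\mathbf{1})\otimes Y \xrightarrow{\alpha} X\otimes(\mathbf{1}\otimes Y)\xrightarrow{X\otimes\lambda} X\otimes Y .
\]
Here $\rho$ is a morphism by the canonical morphism of Definition~\ref{def:ambient-category}. The map $\lambda$ is a morphism, indeed an isomorphism, by Proposition~\ref{prop:left-unitor}. Bifunctoriality again transports these through $\otimes$. Both sides are parallel arrows $(X\otimes\mathbf{1})\otimes Y\to X\otimes Y$, so they coincide by thinness. Szlach\'{a}nyi's remaining unit axioms, such as $\lambda_{\mathbf{1}}\circ\rho_{\mathbf{1}}=\mathrm{id}$, follow the same way, because $\mathbf{1}\otimes\mathbf{1}\to\mathbf{1}\to\ldots$ are endomorphisms or parallel maps in a poset.

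\textbf{Main obstacle.} The hard step is justifying that the structure maps genuinely live in the thin category rather than only in $\mathbf{B}$. Thinness concerns the preorder, while $\alpha,\lambda,\rho$ are natural transformations on $\mathbf{B}$, so I must identify each component with the unique preorder arrow between its endpoints. For $\alpha$ this needs the surjection direction chosen in Proposition~\ref{obs:S1} to hold uniformly, not just at $(P,A,T)$. The first thing I will try is to argue that the canonical rebracketing map is always surjective onto the target in $\mathrm{GF}(2)\text{-}\mathbf{Vect}$, since it is a type-preserving function that merely regroups components. That would make every component a preorder arrow. Naturality and whiskering then need no separate verification, since in a poset every square commutes.
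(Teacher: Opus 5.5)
Your reduction is the paper's own argument: its proof invokes Theorem~\ref{thm:thin} and notes that the two paths around each diagram are parallel. The step you flag as the main obstacle is a genuine gap, in your proposal and equally in that one-line proof, and your remedy does not close it.

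Showing that each component of $\alpha$ (and of $\lambda$, $\rho$ and their whiskerings) is surjective only shows that each edge has an image in the thin category. It cannot make parallel composites equal in $\mathbf{B}$. The passage from $\mathbf{B}$ to the Information Dominance Preorder sends every surjection $Y\twoheadrightarrow X$ to the single arrow witnessing $X\le Y$, so it is not faithful. For example, the identity and the swap on an object containing two operations of the same type are distinct parallel surjections of $\mathbf{B}$ with the same image. Hence the agreement of $(W\otimes\alpha)\circ\alpha\circ(\alpha\otimes Z)$ and $\alpha\circ\alpha$ in the preorder is automatic. It says nothing about the equation in $\mathbf{B}$, which is what the statement asserts. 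Read as statements inside the thin category, S4 and S5 hold vacuously, but then they no longer concern the natural transformations on $\mathbf{B}$ that Propositions~\ref{obs:S1}--\ref{obs:S3} and Theorem~\ref{thm:skew} are about. Closing the gap needs an argument about the specific maps in $\mathbf{B}$, such as an element-level computation with explicitly given structure maps. Thinness of the hom-sets of $\mathbf{B}$ is not available, as the swap shows.

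Your remedy also conflicts with the rest of the paper, and there are smaller problems:
\begin{itemize}
\item \textbf{Invertibility.} If $\alpha$ ``merely regroups components'', it is a bijection whose inverse is again a regrouping, so it is invertible in $\mathbf{B}$. That contradicts Proposition~\ref{obs:S2}, and the strict inequality $P\otimes(A\otimes T)<(P\otimes A)\otimes T$ of Proposition~\ref{obs:S1} that fixes the direction of $\alpha$. With such concrete maps the pentagon does follow by element chasing (both composites send $(((w,x),y),z)$ to $(w,(x,(y,z)))$) with no appeal to thinness, but the structure is then not skew.
\item \textbf{Whiskering.} Bifunctoriality makes $\alpha\otimes Z$, $W\otimes\alpha$ and $\rho_X\otimes Y$ morphisms of $\mathbf{B}$, not surjections. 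A functor is only guaranteed to preserve split epimorphisms, and the axioms insist that the relevant surjections admit no sections.
\item \textbf{Unit maps.} With the paper's $\rho\colon A\otimes\mathbf{1}\to A$, both unit maps point out of the tensor. So $\lambda_{\mathbf{1}}\circ\rho_{\mathbf{1}}$ is not composable in $\mathbf{B}$. The triangle you wrote is Mac Lane's monoidal one rather than Szlach\'{a}nyi's skew triangle, in which one unit map points into the tensor.
\end{itemize}
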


\begin{proof}
The Information Dominance Preorder is a thin category
(Theorem~\ref{thm:thin}). Both paths around the pentagon are
morphisms between the same pair of objects. The diagrams
commute trivially.
\end{proof}

\begin{theorem}[Left Skew Monoidal Structure]
\label{thm:skew}
$Q_{\mathrm{public}}$ carries left skew monoidal structure under the
full Szlach\'{a}nyi conditions.
\end{theorem}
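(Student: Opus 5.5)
The proof assembles the five Szlach\'{a}nyi conditions already established, checks that they are exactly the data of a left skew monoidal category in the sense of~\cite{Szlachanyi2012}, and transports that structure along the factorisation of Theorem~\ref{thm:factorisation} to $Q_{\mathrm{public}}$.

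\textbf{Step 1: the data.} I first fix the data. The tensor is the bifunctor $\otimes$ of Definition~\ref{def:ambient-category} and the unit is $\mathbf{1}$. The constraint maps are
\[
\alpha \colon (X\otimes Y)\otimes Z \to X\otimes(Y\otimes Z), \qquad
\lambda \colon \mathbf{1}\otimes X \to X, \qquad
\rho \colon X \to X\otimes \mathbf{1}.
\]
In the thin category of Theorem~\ref{thm:thin}, each of these exists precisely when the corresponding inequality holds in the Information Dominance Preorder, and it is then unique.

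The directions need care. Szlach\'{a}nyi's left skew convention has $\rho$ pointing into $X\otimes\mathbf{1}$, whereas Proposition~\ref{obs:S3} writes $\rho$ as pointing out of it. I would therefore restate Proposition~\ref{obs:S3} in the skew orientation. That $\rho$ is non-invertible is orientation-independent, since any inverse in one direction is an inverse in the other. The inequality $X\otimes\mathbf{1}\ge X$ supplies the morphism.

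Naturality of $\alpha$, $\lambda$ and $\rho$ is automatic: in a thin category every naturality square commutes.

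\textbf{Step 2: the axioms.} The five axioms of a left skew monoidal category are the pentagon, the three triangles involving the unit, and the condition $\lambda_{\mathbf{1}}\circ\rho_{\mathbf{1}}=\mathrm{id}_{\mathbf{1}}$. Proposition~\ref{thm:coherence} covers the pentagon and the mixed triangle by thinness.

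The same argument disposes of the two remaining triangles. It also disposes of the unit condition, because $\lambda_{\mathbf{1}}\circ\rho_{\mathbf{1}}$ is an endomorphism of $\mathbf{1}$. Theorem~\ref{thm:poset} makes the hom-set $\mathrm{Hom}(\mathbf{1},\mathbf{1})$ a singleton, so that endomorphism is the identity.

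\textbf{Step 3: skewness is genuine.} Propositions~\ref{obs:S1} and~\ref{obs:S2} show that $\alpha$ is neither the identity nor invertible. Proposition~\ref{obs:S3} shows that $\rho$ is non-invertible. Together these show the structure is not a monoidal structure in disguise. Proposition~\ref{prop:left-unitor} records that $\lambda$ is invertible, which is permitted and is what places the structure on the left.

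\textbf{Step 4: transport to $Q_{\mathrm{public}}$.} This is the main obstacle, since everything so far lives in $\mathbf{B}$. My first attempt is to define the tensor on $Q_{\mathrm{public}}$ by lifting through the local sections and pushing back:
\[
q\otimes q' \;:=\; \varphi\bigl(s_i(q)\otimes s_j(q')\bigr),
\]
with $s_i$ the sections from the endomorphic lifting paragraph and Remark~\ref{rem:categorical-scope}. The constraint maps are then obtained as the images $\bar{\alpha}$, $\bar{\lambda}$, $\bar{\rho}$ of $\alpha$, $\lambda$, $\rho$ under endomorphic lifting.

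What must be checked is that $\varphi$ is order-preserving, so that the images of the preorder morphisms still exist and the induced preorder on $Q_{\mathrm{public}}$ is again thin. Given that, the coherence conditions descend automatically.

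The delicate point is the strictness witnesses. Proposition~\ref{obs:S1} used the component at $(P,A,T)$. Theorem~\ref{thm:rank14} keeps $P$, $A$ and $T$ as distinct basis vectors in $Q_{\mathrm{public}}$, and I would use this to argue that the two bracketings do not collapse under $\varphi$. If they did collapse, the induced associator would be invertible, the structure would degenerate to an ordinary monoidal one, and the non-invertibility claims would fail. So this step must be verified explicitly rather than inherited.
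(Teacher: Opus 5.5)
Your outline follows the paper's route: collect S1--S5 and the invertibility of $\lambda$ in $\mathbf{B}$, then push the structure to $Q_{\mathrm{public}}$ by endomorphic lifting. Your Step~2 list (pentagon, three unit triangles, $\lambda_{\mathbf{1}}\circ\rho_{\mathbf{1}}=\mathrm{id}$) is the correct set of skew axioms, and more complete than Proposition~\ref{thm:coherence}. However, the reorientation of $\rho$ in Step~1 fails, and Step~2 fails with it. The thin category is oriented along surjections. That is how Proposition~\ref{obs:S1} gets $\alpha\colon(P\otimes A)\otimes T\to P\otimes(A\otimes T)$ out of $P\otimes(A\otimes T)\le(P\otimes A)\otimes T$. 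In that orientation $X\otimes\mathbf{1}\ge X$ supplies an arrow $X\otimes\mathbf{1}\to X$, not $X\to X\otimes\mathbf{1}$. An arrow $X\to X\otimes\mathbf{1}$ needs $X\otimes\mathbf{1}\le X$. With $X\otimes\mathbf{1}\ge X$ and Theorem~\ref{thm:poset}, this forces $X\otimes\mathbf{1}=X$ and $\rho=\mathrm{id}$, contradicting the non-invertibility you want to keep. Equivalently, by the argument of Proposition~\ref{obs:S2}, $X<X\otimes\mathbf{1}$ excludes any such arrow. Reversing the orientation of the whole category to create that arrow destroys $\alpha$ at $(P,A,T)$, for the same reason. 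It is true that non-invertibility is orientation-independent, but that is beside the point: in a poset, a non-invertible arrow in one direction excludes every arrow in the other. So no orientation carries both the left-skew $\alpha$ witnessing S1--S2 and a non-invertible $\rho\colon X\to X\otimes\mathbf{1}$, and ``everything commutes by thinness'' has no data to act on. The paper does not attempt this reorientation; it keeps $\rho\colon A\otimes\mathbf{1}\to A$ from Proposition~\ref{obs:S3}. Note also that invertibility of $\lambda$ (your Step~3) does not place a structure on the left. The directions of $\alpha$, $\lambda$, $\rho$ do, which is why this orientation problem is not cosmetic.

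Step~4 is the only part of the statement that goes beyond the propositions about $\mathbf{B}$, and you leave it open; the route you sketch would not close it. The formula $q\otimes q':=\varphi(s_i(q)\otimes s_j(q'))$ does not typecheck. Here $\otimes$ is a bifunctor on objects of $\mathbf{B}$, $s_i(q)$ is an element of $V_i$, and $\varphi$ is defined on $V=\bigoplus_i V_i$, not on products of its summands. Likewise $\varphi$ acts on elements rather than on objects of $\mathbf{B}$, so ``$\varphi$ is order-preserving'' and ``the induced preorder on $Q_{\mathrm{public}}$'' require a construction you do not give. More seriously, Theorem~\ref{thm:rank14} says only that $e_P$, $e_A$, $e_T$ are independent; it says nothing about the images of the two bracketed objects. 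An activation vector records which dimensions fire and carries no grouping, so the evident image of either bracketing is the same vector $e_P+e_A+e_T$. The collapse you worry about is therefore built into $\pi$, and the induced associator would be an identity. The paper closes this step by asserting that $\bar{\alpha}=\varphi\circ\alpha\circ s$ inherits non-triviality and non-invertibility from $\alpha$ because $\varphi$ is surjective and $s$ is a section. You are right not to treat that as automatic, since those two facts alone do not stop $\varphi\circ\alpha\circ s$ from being the identity. But having set it aside you supply nothing in its place, so the transfer to $Q_{\mathrm{public}}$ remains unproved.
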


\begin{proof}
S1 (Proposition~\ref{obs:S1}), S2 (Proposition~\ref{obs:S2}),
S3 (Proposition~\ref{obs:S3}), S4 and S5
(Proposition~\ref{thm:coherence}) establish the five
Szlach\'{a}nyi conditions in $\mathbf{B}$. The left unitor
$\lambda$ is an isomorphism
(Proposition~\ref{prop:left-unitor}), completing the left
skew monoidal data. Endomorphic lifting
(Section~\ref{sec:quotient_exists}) transfers this structure to
$\mathrm{End}(Q_{\mathrm{public}})$: the lifted associator
$\bar{\alpha} = \varphi \circ \alpha \circ s$ inherits
non-triviality and non-invertibility from $\alpha$ because
$\varphi$ is surjective and $s$ is a section. The lifted
unitors inherit their (non-)invertibility by the same
mechanism. Coherence is preserved because $\varphi$ is a
homomorphism and the diagrams commute in $\mathbf{B}$ prior
to lifting.
\end{proof}

\begin{remark}[Structural Content]
\label{rem:coherence}
The five Szlach\'{a}nyi conditions are not independent facts for
$Q_{\mathrm{public}}$. They are five projections of one structural
property: the Information Dominance Preorder is a poset and
therefore thin. S1--S3 are properties of the associator as a
strict monotone map. S4--S5 are consequences of the thinness.
The left skew direction reflects obligation assignment: the
right bracketing $P \otimes (A \otimes T)$ demands evaluation
of total account history because it assigns obligation to the
account holder rather than the initiator. And, as left skew, once recorded under
one bracketing, obligation cannot be retrospectively reassigned
to the other: it is irreversible obligation.
\end{remark}
\section{Conclusion}
\label{sec:conclusion}

The introduction asked: what is the shared structural routing object
that makes interoperability realisable, and what forces that object
into existence? For the transfer of monetary value, the routing object
is $Q_{\mathrm{public}}$, a universal quotient of rank~14 under the selected corpora
in the ambient category~$\mathbf{B}$ of typed banking domains. What
forces it into existence is the combination of four axioms constituting
the intrinsic morphism structure of~$\mathbf{B}$ and the definite causal
order established by strict Galois domination
(Corollary~\ref{cor:strict-domination}, Theorem~\ref{thm:poset}).
The factorisation echoes Proposition~21 of Gogioso
et al.~\cite{gogioso2020}, where definite causal order
similarly forces epi-mono factorisation through a
mediating object.

The four axioms rule out retractions between basis dimensions,
certifying pairwise incomparability in the information dominance
preorder. The pure-signal argument
(Theorem~\ref{thm:rank14}) connects pairwise incomparability to
linear independence over $\mathrm{GF}(2)$, closing the rank at
exactly~14. The convergence of independently developed OBIE and
BIAN instruments on the same dimensional structure under the four
axioms is the primary categorical signal. CDR confirms the
dimensional stability as a projection witness. PSD2 serves as a
perturbation test.

The Information Dominance Preorder, grounded in TIL,
establishes all five Szlach\'{a}nyi conditions.
S1 (non-triviality of the associator), S2 (non-invertibility
of the associator), and S3 (non-invertibility of the right unit)
follow from the associator's status as a strict monotone map in
the Information Dominance Preorder. S4 (pentagon) and S5 (mixed triangle) follow from thinness (Theorem~\ref{thm:thin}). The factorisation Theorem~\ref{thm:factorisation} forces this
structure to be concentrated at~$Q_{\mathrm{public}}$. 

The factorisation question is closed. The categorical machinery reduces to one substantive fact, that the obligation to record is irreversible. Five millennia of record keeping is formalised as a
thin category. The monoidal structure that banking
operations impose on it is left skew. So what will the axiomatisation of the nine precondition-tier dimensions tell us? And if the axioms are the forcing condition, not the institution, then the same quotient should govern any ledger-bearing manifold — including the blockchain?

\paragraph{Coda for practitioners.}
Architectural discourse frequently categorises the legacy bilateral mesh of private adaptors as an engineering failure. This paper rejects that classification. The bilateral mesh operated as a stable equilibrium at earlier institutional scales. The $O(n \cdot k)$ integration cost is not a failure of engineering discipline. It is the topological scaling limit for distributing the coherence burden across private implementations of the same four axioms. 

Decades of banking infrastructure have operated on an implicit left skew monoidal algebra to protect the record keeping obligation. The universal quotient finally makes this deep shared structure explicit. With the quotient in hand, the $O(n \cdot k)$ bilateral adaptors are replaced by the $O(n)$ projections through $Q_{\mathrm{public}}$. The factorisation theorem guarantees this.

For regulators, the quotient provides something that does not currently exist. It is a formal description of what has been regulated. The four axioms and the 14-dimensional quotient dictate the required structure for a compliant implementation of monetary value transfer. They give a bank a concrete target for realising that compliance. 

\bibliographystyle{eptcs}

\clearpage
\appendix

\section{The Measurement Pipeline}
\label{app:pipeline}

The computational witness of Section~\ref{sec:witnesses} is produced by a deterministic, publicly reproducible pipeline.
This appendix describes the
three stages of that pipeline: signal extraction; semantic decomposition; and
rank computation with sufficient precision for independent replication or
refutation.

\subsection{Stage 1: Signal Extraction}
\label{app:pipeline:signal}

Each corpus is represented as a collection of OpenAPI~3 (OAS3) specifications,
fetched from their canonical public repositories at fixed version tags (OBIE
v3.1~\cite{OBIE}, AU CDR v1.28~\cite{CDR}, BIAN release~12.0.0~\cite{BIAN},
Berlin Group NextGenPSD2 v1.3.16~\cite{PSD2}).
The pipeline processes each specification path-by-path and method-by-method.
For every \texttt{(path, HTTP-method)} pair the pipeline constructs a single
\emph{semantic signal string} by concatenating, in order:

\begin{enumerate}
  \item the URL path string;
  \item the \texttt{operationId}, \texttt{summary}, and \texttt{description}
        fields of the operation object;
  \item all tag strings attached to the operation (these carry high-level
        service-domain vocabulary in BIAN);
  \item all parameter \texttt{name} and \texttt{description} strings, together
        with strings extracted recursively from each parameter's
        \texttt{schema} object;
  \item strings extracted recursively from the request-body schema (if
        present);
and
  \item strings extracted recursively from all response-body schemas.
\end{enumerate}

The schema traversal follows \texttt{\$ref} boundaries to their immediate target
name only (the reference identifier itself is appended as a string), then
descends into \texttt{properties}, \texttt{items}, \texttt{allOf},
\texttt{anyOf}, and \texttt{oneOf} nodes to a maximum depth of~4.
The
\texttt{title}, \texttt{description}, \texttt{name}, \texttt{summary}, and
\texttt{enum} fields are harvested at each node.
The depth ceiling prevents
combinatorial blowup in deep schema hierarchies while retaining the first-order
semantic vocabulary.
The resulting signal string for each endpoint is the full semantic payload of
that operation as declared by the standard.
It does not include runtime data,
institutional configuration, or any source outside the published specification.

\subsection{Stage 2: Semantic Decomposition}
\label{app:pipeline:decompose}

Each signal string is lowercased and matched against a \emph{frozen pattern
set} comprising a fixed ordered list of 14 compiled regular-expression rules, one per
basis dimension.
A dimension $D_k$ is activated for a given endpoint if and
only if the corresponding pattern matches the signal string.
Matching is
performed by \texttt{re.search} (substring match, not full-string anchor),
so a pattern fires whenever its vocabulary appears anywhere in the concatenated
payload.
\paragraph{Pattern design principles.}
Each pattern was derived iteratively from the public corpora (OBIE and CDR).
The BIAN corpus was then introduced, triggering refinement that
discovered $V$ and $L$ in CDR (Section~\ref{sec:basis}).
The base patterns were \emph{frozen} (FROZEN\_PATTERNS) after this refinement.
A separate extended pattern set (EXTENDED\_PATTERNS) was added post-freeze to increase BIAN coverage without altering the rank (Section~\ref{sec:benign}).
The following principles governed finalisation.

\begin{enumerate}
  \item \textbf{Semantic specificity.}  Patterns are anchored to vocabulary
        that is specific to the concept in question.
Terms that appear broadly
        across multiple domains (e.g.\ \texttt{rate} alone, \texttt{account}
        alone) are excluded or qualified.
For example, dimension~$M$
        (MarketPrice) requires \texttt{market\_rate}, \texttt{exchange\_rate},
        or \texttt{fx\_spot} rather than \texttt{rate} alone, to avoid
        conflation with interest rates and product rates.
  \item \textbf{No circular anchoring.}  Service-domain names are never used as
        pattern anchors.
Matching an operation to dimension~$V$
        (SecuritiesPosition) because it resides in the
        \texttt{SecuritiesPositionKeeping} YAML would read the answer from the
        label.
Patterns fire exclusively on payload vocabulary internal to the
        operation object.
  \item \textbf{False-positive discipline.}  Each tightening decision is
        recorded in inline comments within the source.
For example, the
        pattern for~$V$ uses \texttt{securities} (plural noun) rather than the
        prefix \texttt{securit-} because CDR payment-plan endpoints carry a
        field named \texttt{securityId} (a payment reference field unrelated to
        securities holdings) which fired incorrectly under the prefix form.
\end{enumerate}

\textbf{Extended patterns for institutional vocabulary.}
A second, clearly separated pattern set (\texttt{EXTENDED\_PATTERNS})
maps institutional synonyms to existing dimensions when BIAN's
vocabulary differs from the regulatory vocabulary of the public corpora.
Examples include: \texttt{ledger} and \texttt{journal\_entry} mapping to T;
\texttt{nostro\_account} and \texttt{cash\_position} mapping to A;
\texttt{nav} and \texttt{corporate\_action} mapping to V.
The extended patterns increase BIAN's dimensional coverage
but do not alter any rank computation.
This is verified by disabling the extended filter entirely:
all per-corpus and union ranks are unchanged~\cite{RANKREPO}.

\textbf{BIAN dark endpoints.} Of BIAN's 4,484 endpoints,
2,799 activate no dimension under the frozen patterns alone
and contribute zero rows to the activation matrix.
These operations lie outside the monetary value transfer scope
and do not affect the rank computation.

\paragraph{OBIE and CDR dark endpoints.}
Every operation in the OBIE and CDR corpora activates at least one dimension.
The pipeline explicitly verifies this condition and reports any unmapped
endpoints.
The zero dark endpoint condition guarantees that no operation
contributes a zero row to the activation matrix.
This prevents artificial rank inflation from trivially non-contributing endpoints.

\subsection{Stage 3: Rank Computation}
\label{app:pipeline:rank}

The activation matrix $\mathcal{M}$ is a binary matrix over $\mathrm{GF}(2)$ with one
row per endpoint and one column per basis dimension (14 columns).
Entry
$\mathcal{M}_{ij} = 1$ if and only if endpoint~$i$ activates dimension~$j$ under Stage~2.
Rank is computed by standard Gaussian elimination over $\mathrm{GF}(2)$:
column-by-column pivot search, row swap, and row XOR to clear all other
non-zero entries in the pivot column.
The pivot count at termination equals the rank.

Sub-corpus ranks are computed by restricting the matrix to the rows whose
corpus label matches the requested filter and applying the same elimination.
The union rank is computed by filtering to the union of the requested corpus labels.

\paragraph{Reproducibility and Falsifiability.}
The measurement operator is implemented as a deterministic pipeline available for independent verification.
The rank~14 result is reproducible using the banking rank pipeline in github~\cite{RANKREPO} (commit \texttt{2505c60}).
The complete operation-concept activation matrix and sub-corpus results are provided in the associated TSV dataset (commit \texttt{0edd9f5}).
Any investigator identifying a verifiable linear dependency between two columns of the activation matrix directly refutes the rank~14 claim.

\paragraph{TSV Schema.}
The activation detail TSV contains one row per endpoint with the following columns: \texttt{corpus} (source standard), \texttt{endpoint} (operation identifier), and then one binary column per dimension (\texttt{A}, \texttt{T}, \texttt{P}, \texttt{C}, \texttt{B}, \texttt{D}, \texttt{S}, \texttt{Y}, \texttt{R}, \texttt{F}, \texttt{I}, \texttt{V}, \texttt{L}, \texttt{M}), and finally \texttt{activated\_dims} (count of activated dimensions). A value of 0 indicates that the endpoint does not activate that dimension and a 1 that it does. Investigators can verify rank claims by filtering rows by corpus and computing the rank of the resulting binary matrix over $\mathrm{GF}(2)$.

\section{The GF(2) Activation Matrix}
\label{app:matrix}

Table~\ref{tab:gf2_matrix_full} presents the pipeline output with columns reordered to make the rank~14 basis explicit;
the pre-permutation matrix containing all endpoint rows is available in the pipeline output TSV described in Appendix~\ref{app:pipeline}.

\begin{table}[ht]
\centering
\caption{$\mathrm{GF}(2)$ Activation Matrix for $Q_{\mathrm{public}}$ (OBIE $\cup$ BIAN $\cup$ CDR): columns permuted to expose the $14 \times 14$ identity structure, witnessing $\operatorname{rank}(Q_{\mathrm{public}}) = 14$ by inspection.
Row order follows the decomposition $7+3+2+1+1$; column order is permuted accordingly.}
\label{tab:gf2_matrix_full}
\small
\begin{tabular}{lcccccccccccccc}
\toprule
& \multicolumn{7}{c}{\textit{Core}} 
& \multicolumn{3}{c}{\textit{Transactional}} 
& \multicolumn{2}{c}{\textit{Prudential}} 
& \multicolumn{1}{c}{\textit{Infra.}} 
& \multicolumn{1}{c}{\textit{Ext.}} \\
\cmidrule(lr){2-8}\cmidrule(lr){9-11}\cmidrule(lr){12-13}\cmidrule(lr){14-14}\cmidrule(lr){15-15}
Operation (corpus) & A & T & B & D & S & Y & R & P & C & F & L & V & I & M \\
\midrule
GetAccount        (OBIE/BIAN) & 1 & 0 & 0 & 0 & 0 & 0 & 0 & 0 & 0 & 0 & 0 & 0 & 0 & 0 \\
GetTransactions   (OBIE/BIAN) & 0 & 1 & 0 & 0 & 0 & 0 & 0 & 0 & 0 & 0 & 0 & 0 & 0 & 0 \\
GetBeneficiaries  (OBIE)      & 0 & 0 & 1 & 0 & 0 & 0 & 0 & 0 & 0 & 0 & 0 & 0 & 0 & 0 \\
CreateDirectDebit (OBIE/BIAN) & 0 & 0 & 0 & 1 & 0 & 0 & 0 & 0 & 0 & 0 & 0 & 0 & 0 & 0 \\
GetStandingOrders (OBIE/BIAN) & 0 & 0 & 0 & 0 & 1 & 0 & 0 & 0 & 0 & 0 & 0 & 0 & 0 & 0 \\
GetCustomerInfo   (OBIE/BIAN) & 0 & 0 & 0 & 0 & 0 & 1 & 0 & 0 & 0 & 0 & 0 & 0 & 0 & 0 \\
GetProductRates   (OBIE/BIAN) & 0 & 0 & 0 & 0 & 0 & 0 & 1 & 0 & 0 & 0 & 0 & 0 & 0 & 0 \\
\midrule
CreatePayment     (OBIE/BIAN) & 0 & 0 & 0 & 0 & 0 & 0 & 0 & 1 & 0 & 0 & 0 & 0 & 0 & 0 \\
AuthoriseConsent  (OBIE/BIAN) & 0 & 0 & 0 & 0 & 0 & 0 & 0 & 0 & 1 & 0 & 0 & 0 & 0 & 0 \\
ConfirmFunds      (OBIE/BIAN) & 0 & 0 & 0 & 0 & 0 & 0 & 0 & 0 & 0 & 1 & 0 & 0 & 0 & 0 \\
\midrule
GetCreditLimit    (BIAN/CDR)  & 0 & 0 & 0 & 0 & 0 & 0 & 0 & 0 & 0 & 0 & 1 & 0 & 0 & 0 \\
GetShareHoldings  (BIAN/CDR)  & 0 & 0 & 0 & 0 & 0 & 0 & 0 & 0 & 0 & 0 & 0 & 1 & 0 & 0 \\
\midrule
DiscoveryQuery    (BIAN/CDR)  & 0 & 0 & 0 & 0 & 0 & 0 & 0 & 0 & 0 & 0 & 0 & 0 & 1 & 0 \\
\midrule
MarketQuote       (BIAN)      & 0 & 0 & 0 & 0 & 0 & 0 & 0 & 0 & 0 & 0 & 0 & 0 & 0 & 1 \\
\bottomrule
\end{tabular}
\end{table}

\begin{remark}[On the Identity Structure]
\label{rem:identity}
The $14 \times 14$ identity submatrix of Table~\ref{tab:gf2_matrix_full} 
is a presentation artefact: columns are permuted to expose one pure-signal 
endpoint per dimension. The rank result is established by Gaussian elimination 
over the full corpus matrix, not from this submatrix. The non-trivial claim 
is that pure-signal endpoints exist at all with endpoints activating exactly 
one dimension and no other. This is an empirical property of the corpus 
discovered in frozen patterns, not engineered by column selection. These 
endpoints constitute an explicit section $s \colon Q_{\mathrm{public}} \to 
\mathcal{P}$ satisfying $\varphi \circ s = \mathrm{id}_{Q_{\mathrm{public}}}$, 
witnessing the split epimorphism independently of the column ordering.
\end{remark}

\end{document}